  \pgfplotsset{compat = 1.17, colormap name = viridis}
  \tikzset{external/system call = {%
    pdflatex \tikzexternalcheckshellescape
      -halt-on-error
      -interaction=batchmode
      -jobname "\image" "\texsource"}}
\newcommand{%
  \tikzexternalenable%
  \tikzsetnextfilename{}%
  \input{graphics/.tikz}%
  \tikzexternaldisable%
}[1]{%
  \tikzexternalenable%
  \tikzsetnextfilename{#1}%
  \input{graphics/#1.tikz}%
  \tikzexternaldisable%
}
\newcommand{\trans}{\ensuremath{\mkern-1.5mu\mathsf{T}}}
\newcommand{\ddt}{\ensuremath{\frac{\operatorname{d}}{\operatorname{d}t}}}
\DeclareMathOperator{\mrank}{rank}
\DeclareMathOperator{\Reyn}{Re}
\DeclareMathOperator{\mspan}{span}
\newcommand{\R}{\ensuremath{\mathbb{R}}}
\newcommand{\N}{\ensuremath{\mathbb{N}}}
\newcommand{\E}{\ensuremath{\mathbb{E}}}
\newcommand{\nh}{\ensuremath{N}}
\newcommand{\np}{\ensuremath{p}}
\newcommand{\nT}{\ensuremath{T}}
\newcommand{\nr}{\ensuremath{r}}
\newcommand{\nmin}{\ensuremath{n_{\min}}}
\newcommand{\Ar}{\ensuremath{\widehat{A}}}
\newcommand{\Br}{\ensuremath{\widehat{B}}}
\newcommand{\Kr}{\ensuremath{\widehat{K}}}
\newcommand{\Xr}{\ensuremath{\widehat{X}}}
\newcommand{\Yr}{\ensuremath{\widehat{Y}}}
\newcommand{\Xt}{\ensuremath{\widetilde{X}}}
\newcommand{\Yt}{\ensuremath{\widetilde{Y}}}
\newcommand{\xr}{\ensuremath{\hat{x}}}
\newcommand{\xt}{\ensuremath{\tilde{x}}}
\newcommand{\Xn}{\ensuremath{X^{\operatorname{n}}}}
\newcommand{\Un}{\ensuremath{U^{\operatorname{n}}}}
\newcommand{\Yn}{\ensuremath{Y^{\operatorname{n}}}}
\newcommand{\xn}{\ensuremath{x^{\operatorname{n}}}}
\newcommand{\un}{\ensuremath{u^{\operatorname{n}}}}
\newcommand{\yn}{\ensuremath{y^{\operatorname{n}}}}
\newcommand{\xs}{\ensuremath{\bar{x}}}
\newcommand{\us}{\ensuremath{\bar{u}}}
\newcommand{\Ocal}{\ensuremath{\mathcal{O}}}
\newcommand{\Vcal}{\ensuremath{\mathcal{V}}}
\newcommand{\Xcal}{\ensuremath{\mathcal{X}}}
\newcommand{\Jx}{\ensuremath{J_{\operatorname{x}}}}
\newcommand{\Ju}{\ensuremath{J_{\operatorname{u}}}}
\newcommand{\orth}{\ensuremath{\texttt{orth}}}
\newcommand{\datatrip}{\ensuremath{(U, X, Y)}}
\newcommand{\datatripn}{\ensuremath{(\Un, \Xn, \Yn)}}
\newcommand{\datatripnj}{\ensuremath{(\Un_{j}, \Xn_{j}, \Yn_{j})}}
\newcommand{\datatripnjone}{\ensuremath{%
  (\Un_{j - 1}, \Xn_{j - 1}, \Yn_{j - 1})}}
\newcommand{\datatripr}{\ensuremath{(U, \Xr, \Yr)}}
\newcommand{\datatript}{\ensuremath{(U, \Xt, \Yt)}}
\theoremstyle{plain}\newtheorem{proposition}{Proposition}
\theoremstyle{plain}\newtheorem{lemma}{Lemma}
\theoremstyle{plain}\newtheorem{theorem}{Theorem}
\theoremstyle{plain}\newtheorem{corollary}{Corollary}
\theoremstyle{definition}\newtheorem{remark}{Remark}
\newcommand{\plotfontsize}{\footnotesize}
\definecolor{matlabblue}{HTML}{0072BD}
\definecolor{matlaborange}{HTML}{D95319}
\definecolor{matlabyellow}{HTML}{EDB120}
\definecolor{matlabpurple}{HTML}{7E2F8E}
\definecolor{matlabgreen}{HTML}{77AC30}
\definecolor{matlablightblue}{HTML}{4DBEEE}
\definecolor{matlabred}{HTML}{A2142F}
\tikzstyle{sline} = [
\tikzstyle{samplecol} = [
\begin{document}


\title{An adaptive data sampling strategy for stabilizing dynamical systems via
  controller inference}

\author[$\ast$]{Steffen W. R. Werner}
\affil[$\ast$]{Department of Mathematics, Division of Computational Modeling and
  Data Analytics, and National Security Institute, Virginia Tech,
  Blacksburg, VA 24061, USA.\authorcr
  \email{steffen.werner@vt.edu}, \orcid{0000-0003-1667-4862}
}

\author[$\dagger$]{Benjamin Peherstorfer}
\affil[$\dagger$]{Courant Institute of Mathematical Sciences,
  New York University, New York, NY 10012, USA.\authorcr
  \email{pehersto@cims.nyu.edu}, \orcid{0000-0002-1558-6775}}

\shorttitle{Adaptive data sampling for control}
\shortauthor{S.~W.~R.~Werner, B.~Peherstorfer}
\shortdate{2026-02-10}
\shortinstitute{}

\keywords{%
  nonlinear systems,
  feedback stabilization,
  context-aware learning,
  informative data,
  data-driven control
}

\msc{%
  37N35, 
  65F55, 
  90C22, 
  90C59, 
  93B52  
}

\abstract{%
  Learning stabilizing controllers from data is an important task in
  engineering applications; however, collecting informative data 
  is challenging because unstable systems often lead to rapidly growing or
  erratic trajectories.
  In this work, we propose an adaptive sampling scheme that generates data
  while simultaneously stabilizing the system to avoid instabilities during the
  data collection.
  Under mild assumptions, the approach provably generates data sets that are
  informative for stabilization and have minimal size.
  The numerical experiments demonstrate that controller inference with the
  novel adaptive sampling approach learns controllers with up to one order
  of magnitude fewer data samples than unguided data generation.
  The results show that the proposed approach opens the door to stabilizing
  systems in edge cases and limit states where instabilities often occur and
  data collection is inherently difficult.
}

\novelty{%
  We introduce an adaptive sampling scheme for the
  data-driven construction of stabilizing feedback controllers for nonlinear
  dynamical systems.
  The proposed method generates informative data while simultaneously
  stabilizing the system to avoid instabilities during the data collection,
  which otherwise could render the collected data meaningless.
}

\maketitle


\section{Introduction}%
\label{sec:intro}

The stabilization of dynamical systems is an essential building block in
many applications ranging from the control of power networks to chemical
reactors to fluid structure interactions.
When explicit model descriptions are unavailable, data has to be used for
the design of suitable stabilizing controllers.
However, the collection of data from unstable systems is a challenging task
since the dynamical instabilities quickly render the collected data
uninformative.
In this work, we consider the design of informative data sets for the task of
system stabilization via an adaptive data sampling scheme.

Many approaches have been considered for the task of data-driven control
ranging from model-free parameter tuning
approaches~\cite{CamLS02, FliJ13, LeqGMetal03, SafT95}
to scientific machine learning and reinforcement learning
methods~\cite{HamXY21, PerUS21, KraTR19, WerP25}.
With the rise of data-driven reduced-order modeling, a two-step procedure
has been established in which first, a model of the underlying dynamics
is learned, before this model is used for the design of controllers via
classical model-based approaches.
Popular data-driven modeling methods for the task of control include
among others the dynamic mode decomposition and operator
inference~\cite{Sch10, TuRLetal14, PehW16, Peh20, SwiKHetal20, BerP24},
sparse identification methods~\cite{BruPK16, SchTW18}, and
the Loewner framework~\cite{MayA07, SchU16, PehGW17, SchUBetal18,
AntGI16, GosA18}.
In recent years, the concept of data informativity has emerged as an effective
tool for solving a variety of data-driven tasks including system
stabilization~\cite{DePT20, VanETetal20, WerP24, WerP23a}.
This framework is well suited for the development of new data-driven approaches
as it is based on the question of how much data is needed to fulfill a
considered task.
While there is such a variety of different approaches for designing controllers
from given data, most of these methods assume that the data contain enough
information about the system to solve the task at hand.
Here, we aim to design such informative data sets.

A significant component in the data sampling for dynamical processes are
the external inputs of the systems.
Classically, these are chosen to be random signals.
Such inputs satisfy with probability one the persistent excitation condition
that guarantees, for example, upper bounds on the sample complexity for
system identification~\cite{WilRMetal05}.
While this choice of inputs allows to eventually identify all information about
the system, it often is too generic to achieve small sampling numbers for
the particular task of designing stabilizing controllers for a particular given
system.
In fact, it has been shown in~\cite{WerP24} that there exist significantly
smaller data sets that are informative for the task of system
stabilization.
In this work, we propose to construct such informative data sets by iteratively
updating the input signals for the system based on a sequence of
low-dimensional stabilizing controllers over subspaces computed via
low-dimensional controller inference.
These subspaces are expanded using the sampled data until the full unstable
dynamics of the underlying system are stabilized.
The main motivation for this approach is that the sample complexity of
low-dimensional controller inference only scales with the dimension of the
subspace such that only few data samples are needed to stabilize the system in
the low-dimensional subspace, as we will show.
Besides a lower sample complexity for constructing stabilizing controllers,
the proposed adaptive data generation suppresses the dynamical
instabilities during the sampling process, ensuring informative data for
stabilization.

The remainder of this manuscript is organized as follows.
In \Cref{sec:basics}, we remind the reader of basic concepts needed for the
stabilization of dynamical systems and the data informativity framework for
learning stabilizing controllers.
We derive the adaptive data sampling scheme in \Cref{sec:method} based on
theoretical results that outline suitable input signals, and we discuss
convergence of the method as well as extensions for stable numerical
computations.
Numerical experiments are performed in \Cref{sec:experiments} including
the stabilization of a power network and the laminar flow behind an obstacle.
The paper is concluded in \Cref{sec:conclusions}.


\section{Preliminaries}%
\label{sec:basics}


\subsection{Stabilization of dynamical systems}%
\label{sec:stabilization}

Let $(\xn(t))_{t \geq 0}$ be a process that can be controlled with inputs
$(\un(t))_{t \geq 0}$, where $\xn(t) \in \R^{\nh}$ and $\un(t) \in \R^{\np}$
are state and control at time $t$, respectively.
For feasible initial conditions $\xn(0) \in \Xcal_{0}^{\operatorname{n}}
\subseteq \R^{\nh}$, the dynamics of $\xn(t)$ are described in continuous time
by systems of ordinary differential equations
\begin{equation} \label{eqn:sys_ct}
  \ddt \xn(t) = f(\xn(t), \un(t)), \quad t \geq 0,
\end{equation}
with the time evolution function
$f\colon \R^{\nh} \times \R^{\np} \to \R^{\nh}$,
which can be nonlinear in both arguments.
In discrete time, processes are governed by systems of difference equations
\begin{equation} \label{eqn:sys_dt}
  \xn(t + 1) = f(\xn(t), \un(t)), \quad t \in \N \cup \{ 0 \}.
\end{equation}
The superscript $\operatorname{n}$ used above in the notation of the
states $\xn(t)$ and inputs $\un(t)$ indicates throughout this work the
association of these quantities with the original, potentially nonlinear
system described by the model $f$.
We are interested in stabilizing processes towards their steady states.
A steady state $(\xs, \us)$ is an equilibrium of the time evolution function
such that in the continuous-time case~\cref{eqn:sys_ct} it holds
\begin{equation} \label{eqn:steady_ct}
  0 = f(\xs, \us),
\end{equation}
and for discrete-time systems~\cref{eqn:sys_dt} it holds
\begin{equation} \label{eqn:steady_dt}
  \xs = f(\xs, \us).
\end{equation}
Steady states are called unstable if trajectories do not converge to
$(\xs, \us)$ for initial conditions $\xn(0)$ in an arbitrarily small
neighborhood about $(\xs, \us)$.
The task we consider is to construct a state-feedback controller
$K \in \R^{\np \times \nh}$ such that there exists an $\epsilon > 0$ so that
for all initial conditions $\xn(0) \in \Xcal_{0}^{\operatorname{n}}$ with
$\lVert \xn(0) - \xs \rVert \leq \epsilon$, the control signal given by
\begin{equation} \label{eqn:K}
  \un(t) = K(\xn(t) - \xs) + \us
\end{equation}
leads to $\xn(t) \to \xs$ in the limit $t \to \infty$.
Such a controller $K$ is then denoted as locally stabilizing.


\subsection{Controlling systems from data}%
\label{sec:coninf}

In this work, a model of the system dynamics in terms of the right-hand side
function $f$ in~\cref{eqn:sys_ct} or~\cref{eqn:sys_dt} is unavailable in the
sense that $f$ is not given in closed form.
Instead, we assume that $f$ can be evaluated for suitable states and control
inputs.
For either system type~\cref{eqn:sys_ct} or~\cref{eqn:sys_dt}, this leads
to data triplets $\datatripn$ of the form
\begin{subequations} \label{eqn:datatripn}
\begin{align} \label{eqn:datatripn_u}
  \Un & = \begin{bmatrix} \un(t_{1}) &
    \un(t_{2}) & \ldots &
    \un(t_{\nT})
    \end{bmatrix}, \\ \label{eqn:datatripn_x}
  \Xn & = \begin{bmatrix} \xn(t_{1}) &
    \xn(t_{2}) & \ldots &
    \xn(t_{\nT})
    \end{bmatrix}, \\ \label{eqn:datatripn_y}
  \Yn & = \begin{bmatrix}
    f(\xn(t_{1}), \un(t_{1})) &
    f(\xn(t_{2}), \un(t_{2})) & \ldots &
    f(\xn(t_{\nT}), \un(t_{\nT})) 
  \end{bmatrix},
\end{align}
\end{subequations}
at some discrete time points $t_{1}, t_{2}, \ldots, t_{\nT}$.
These time points do not need to be ordered or have a particular distance
to each other and may even be identical.

For continuous-time systems, the columns of $\Yn$ in~\cref{eqn:datatripn_y}
are the derivatives of~\cref{eqn:datatripn_x} with respect to time such that
\begin{equation*}
  \Yn = \begin{bmatrix} \ddt \xn(t_{1}) &
    \ddt \xn(t_{2}) & \ldots &
    \ddt \xn(t_{\nT}) \end{bmatrix}.
\end{equation*}
The simulation of trajectories of continuous-time systems~\cref{eqn:sys_ct}
typically yields a discretization error.
However, the considered data setup is independent of this type of error
since only evaluation pairs of the time evolution function are used and
no exact, continuous trajectories over multiple time steps are needed.
Note that if the time derivatives are only approximated using the states
of consecutive time steps, the time discretization as well as approximation
errors play a role.
However, these errors can typically be controlled by the choice of the time-step
size and discretization schemes.
We do not discuss this particular case in this work.

In the case of discrete-time systems~\cref{eqn:sys_dt}, the evaluation of the
evolution function~$f$ amounts to single forward time steps on trajectories
such that $\Yn$ in~\cref{eqn:datatripn_y} simplifies to
\begin{equation*}
  \Yn = \begin{bmatrix} \xn(t_{1} + 1) & 
    \xn(t_{2} + 1) & \ldots &
    \xn(t_{\nT} + 1) \end{bmatrix}.
\end{equation*}
A classical choice for~\cref{eqn:datatripn} in this case is single
trajectory data resulting in $\Yn$ being $\Xn$ shifted by a single time step
such that
\begin{equation*}
  \begin{aligned}
    \Un & = \begin{bmatrix} \un(0) & \un(1) & \ldots &
      \un(T - 1) \end{bmatrix}, \\
    \Xn & = \begin{bmatrix} \xn(0) & \xn(1) & \ldots &
      \xn(T - 1) \end{bmatrix}, \\
    \Yn & = \begin{bmatrix} \xn(1) & \xn(2) & \ldots &
      \xn(T) \end{bmatrix};
  \end{aligned}
\end{equation*}
see, for example,~\cite{WerP24, WerP23a, VanETetal20, DePT20, Sch10, BruPK16}.


\subsection{Data informativity and controller inference}%
\label{sec:informinfer}

In contrast to traditional approaches, which train some chosen parametrization
of $f$ using~\cref{eqn:datatripn} to infer a model describing the system
dynamics, we are going to directly learn a state feedback controller $K$ that
stabilizes the unknown system described by $f$.
This has been shown to allow for significant reductions in terms of
data needed to solve the task~\cite{WerP24, WerP23a, VanETetal20, DePT20}.
A general concept for data-driven tasks that has been developed in recent
years is data informativity~\cite{WilRMetal05, VanECetal23}.
Underlying is the question, how much data of the form~\cref{eqn:datatripn}
is needed to achieve a task.
This circumvents the need of identifying the true underlying system dynamics
and allows to formulate sample complexity results in terms of the task
at hand.

Necessary and sufficient conditions for data informativity for the
stabilization of linear dynamical systems by state-feedback have been developed
in~\cite{VanETetal20, DePT20} and extended to low-dimensional subspaces
in~\cite{WerP24}.
A relation to nonlinear systems has been outlined in~\cite{WerP23a}, which we
will quickly recap in the following.
For analytic models $f$ in~\cref{eqn:sys_ct,eqn:sys_dt}, the local
dynamics about the steady state $(\xs, \us)$ can be described by a
linearization of the form
\begin{equation} \label{eqn:linearization}
  \begin{aligned}
    f(\xn(t), \un(t)) & = f(\xs, \us) + \Jx (\xn(t) - \xs) +
      \Ju(\un(t) - \us)\\
   & \quad{}+{}
    \Ocal(\lVert \xn(t) - \xs \rVert^{2}) +
    \Ocal(\lVert \un(t) - \us \rVert^{2}),
  \end{aligned}
\end{equation}
where the Jacobians of $f$ with respect to the states $x$ and inputs $u$
evaluated in the steady state $(\xs, \us)$ are given by
\begin{equation*}
  \Jx = \nabla_{\operatorname{x}}f(\xs, \us) \quad\text{and}\quad
  \Ju = \nabla_{\operatorname{u}}f(\xs, \us).
\end{equation*}
The deviation from the desired steady state
$\xt(t) = \xn(t) - \xs$ is then
approximated by the state $x(t)$ of the linear systems
\begin{equation} \label{eqn:linsys}
    \ddt x(t) = \Jx x(t) + \Ju u(t) \quad\text{and}\quad
    x(t + 1) = \Jx x(t) + \Ju u(t),
\end{equation}
for continues and discrete-time systems, respectively, with
$u(t) = \un(t) - \us$ and the
initial condition $x(0) \in \Xcal^{\operatorname{n}}_{0} - \xs$.
The data triplets $\datatrip$ corresponding
to~\cref{eqn:linsys} are related to~\cref{eqn:datatripn} as
approximations to columnwise shifts of the data given by
\begin{subequations} \label{eqn:shiftdata}
\begin{align}
  U & = \Un - \us = \begin{bmatrix} \un(t_{1}) - \us & \ldots &
    \un(t_{T}) - \us \end{bmatrix}, \\
  \Xt & = \Xn - \xs = \begin{bmatrix} \xn(t_{1}) - \xs & \ldots &
    \xn(t_{T}) - \xs \end{bmatrix}, \\
  \Yt & = \Yn - f(\xs, \us) \\ \nonumber
    & = \begin{bmatrix}
    f(\xn(t_{1}), \un(t_{1})) - f(\xs, \us) & \ldots &
    f(\xn(t_{T}), \un(t_{T})) - f(\xs, \us) \end{bmatrix}
\end{align}
\end{subequations}
such that
\begin{equation} \label{eqn:errdata}
  X = \Xt + \Ocal(\lVert \Xt \rVert^{2}) \quad\text{and}\quad
  Y = \Yt + \Ocal(\lVert \Yt \rVert^{2}) + \Ocal(\lVert U \rVert^{2}).
\end{equation}
The linear controller $K$ in~\cref{eqn:K} can be designed from idealized data
triplets $\datatrip$ of linear systems~\cref{eqn:linsys} via data informativity
conditions as outlined in the following proposition.

\begin{proposition}[Data informativity for
  {stabilization~\cite{VanETetal20, WerP24}}]%
  \label{prp:informdata}
  Let $\datatrip$ be a data triplet sampled from a linear state-space model.
  The data triplet is informative for stabilization by feedback if and only if
  one of the following equivalent statements hold:
  \begin{itemize}
    \item[(a)] The matrix $X$ has full row rank and there exists a right inverse
      $X^{\dagger}$ of $X$ such that $YX^{\dagger}$ has only stable eigenvalues.
      A stabilizing controller is then given by $K = U X^{\dagger}$.
    \item[(b)] There exists a matrix $\Theta \in \R^{\nT \times \nh}$ such that
      if the sampled model is continuous in time
      \begin{equation} \label{eqn:lmi_ct}
        X \Theta > 0 \quad\text{and}\quad
        Y \Theta + \Theta^{\trans} Y^{\trans} < 0
      \end{equation}
      hold, or if the sampled model is discrete in time
      \begin{equation} \label{eqn:lmi_dt}
        X \Theta = (X \Theta)^{\trans} \quad\text{and}\quad
        \begin{bmatrix} X \Theta & Y \Theta \\ (Y \Theta)^{\trans} & X \Theta
        \end{bmatrix} > 0
      \end{equation}
      hold.
      A stabilizing controller is then given by $K = U \Theta (X \Theta)^{-1}$.
  \end{itemize}
\end{proposition}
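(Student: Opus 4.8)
The plan is the following. Write the sampled linear model as $\dot{x} = A x + B u$ in continuous time and as $x_{+} = A x + B u$ in discrete time, so that the data satisfy $Y = A X + B U$ for the true but unknown pair $(A, B)$; ``informative for stabilization by feedback'' then means that there is a single $K$ for which $A + B K$ is stable --- Hurwitz, respectively Schur --- for \emph{every} pair $(A, B)$ consistent with the data, i.e.\ every $(A, B)$ with $A X + B U = Y$, and at least one such pair exists by hypothesis. I would prove the two equivalences ``informative $\Leftrightarrow$ (a)'' and ``(a) $\Leftrightarrow$ (b)'' separately, the first being the substantive one and the second essentially a change of variables.

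For ``(a) $\Rightarrow$ informative'' the construction is explicit: taking $K = U X^{\dagger}$ yields $A + B K = (A X + B U) X^{\dagger} = Y X^{\dagger}$ for every consistent $(A, B)$, because $X X^{\dagger} = I$, so the closed loop equals the single data-dependent matrix $Y X^{\dagger}$, which is stable by assumption. For the converse, assume $K$ stabilizes every consistent pair. The key step is to observe that for any row vectors $w^{\trans} \in \R^{1 \times \nh}$ and $v^{\trans} \in \R^{1 \times \np}$ with $w^{\trans} X = v^{\trans} U$, the perturbed pairs $(A + z w^{\trans}, B - z v^{\trans})$ are again consistent for \emph{every} $z \in \R^{\nh}$, and the associated closed loop $(A + B K) + z (w^{\trans} - v^{\trans} K)$ is a rank-one perturbation of $A + B K$ whose trace $\operatorname{tr}(A + B K) + (w^{\trans} - v^{\trans} K) z$ is affine and unbounded in $z$ unless $w^{\trans} - v^{\trans} K = 0$. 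Since the trace of a stable matrix is bounded --- negative in continuous time, of modulus at most $\nh$ in discrete time --- stability for all $z$ forces $w^{\trans} = v^{\trans} K$. Hence the left null space of $\left[ \begin{smallmatrix} X \\ U \end{smallmatrix} \right]$ is contained in that of $\left[ \begin{smallmatrix} I \\ K \end{smallmatrix} \right]$, which is equivalent to the existence of a matrix $X^{\dagger}$ with $X X^{\dagger} = I$ and $K = U X^{\dagger}$; in particular $X$ has full row rank, and for every consistent pair $A + B K = Y X^{\dagger}$, which is stable because $K$ stabilizes the true pair. This is exactly statement (a).

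For ``(a) $\Leftrightarrow$ (b)'' I would use a Lyapunov change of variables. Given (a), let $P \succ 0$ solve the Lyapunov inequality for $Y X^{\dagger}$ (in continuous time $(Y X^{\dagger}) P + P (Y X^{\dagger})^{\trans} \prec 0$, in discrete time the Stein inequality $(Y X^{\dagger}) P (Y X^{\dagger})^{\trans} - P \prec 0$) and set $\Theta = X^{\dagger} P$; then $X \Theta = P \succ 0$, $Y \Theta = (Y X^{\dagger}) P$, and these satisfy~\cref{eqn:lmi_ct} or, after a Schur complement, \cref{eqn:lmi_dt}, while $U \Theta (X \Theta)^{-1} = U X^{\dagger} = K$. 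Conversely, a feasible $\Theta$ has $X \Theta \succ 0$, so $X$ has full row rank, $X^{\dagger} := \Theta (X \Theta)^{-1}$ is a right inverse, and~\cref{eqn:lmi_ct} or~\cref{eqn:lmi_dt} is precisely a Lyapunov, respectively Stein, certificate that $Y \Theta (X \Theta)^{-1} = Y X^{\dagger}$ is stable, with controller $U X^{\dagger}$. The hard part of the whole argument is the converse ``informative $\Rightarrow$ (a)'': one must control the closed loop over the entire, generally unbounded, affine family of consistent pairs, and it is the rank-one-perturbation-plus-trace device that collapses this into the finite, checkable rank and right-inverse condition; the discrete-time instance of that step relies on the bound for the modulus of the trace of a Schur matrix rather than on negativity, and one must check that the perturbed pairs genuinely remain consistent. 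Everything else --- the explicit construction and the linear matrix inequality reformulation --- is routine linear algebra.
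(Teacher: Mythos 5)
Your proof is correct. Note that the paper itself does not prove \Cref{prp:informdata} --- it is quoted from~\cite{VanETetal20, WerP24} --- so there is no in-paper argument to compare against; your write-up (the explicit construction for sufficiency, the rank-one-perturbation/trace device to show that informativity forces $\operatorname{range}\bigl[\begin{smallmatrix} I \\ K \end{smallmatrix}\bigr] \subseteq \operatorname{range}\bigl[\begin{smallmatrix} X \\ U \end{smallmatrix}\bigr]$, and the Lyapunov/Stein change of variables $\Theta = X^{\dagger}P$ with a Schur complement for the discrete-time LMI) is essentially the standard proof given in those references.
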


The eigenvalue criterion in part~(a) of \Cref{prp:informdata} must be treated
differently depending on the type of model that is considered similar to
part~(b).
A matrix has only stable eigenvalues if in the continuous-time case the
real part of all eigenvalues is negative, or if in the discrete-time case the
absolute value of all eigenvalues is smaller than one.

Following \Cref{prp:informdata}, we call data informative for stabilization
if there exists a controller $K$ that stabilizes all linear state-space
models given by matrices $A \in \R^{\nh \times \nh}$ and
$B \in \R^{\nh \times \np}$ that describe the data $\datatrip$ via
$Y = A X + B U$.
In other words, the data contain enough information to solve the stabilization
problem.

The necessary condition for data informativity, namely that $X$ has full
row rank, can only be satisfied if the subspace of reachable system states is
the full state space $\R^{\nh}$.
In the case that system states span only a subspace $\Vcal$ of dimension
$\nr < \nh$, the classical concept of data informativity
(\Cref{prp:informdata}) cannot be applied anymore.
A low-dimensional data informativity concept for the task of stabilization has
been developed in~\cite{WerP24}.
Thereby, the conditions in \Cref{prp:informdata} are not considered in the full
$\nh$-dimensional state space but only in the $\nr$-dimensional subspace
$\Vcal$ using a projection.

\begin{proposition}[Low-dimensional data {informativity~\cite{WerP24}}]%
  \label{prp:lowdimdata}
  Let $\datatrip$ be a data triplet sampled from a linear state-space model
  of dimension $\nh$ and let $\Vcal$ be the $\nr$-dimensional subspace spanned
  by all trajectories of the model such that for an orthogonal basis matrix
  $V$ of $\Vcal$, the state satisfies $x(t) = V \xr(t)$ for all $t \geq 0$,
  where $\xr(t) \in \R^{\nr}$ is the state of a linear $\nr$-dimensional model.
  The data $\datatrip$ are informative for stabilization over $\Vcal$ if and
  only if $\datatripr$ satisfies \Cref{prp:informdata}, with
  \begin{equation*}
    \Xr = V^{\trans} X \quad\text{and}\quad \Yr = V^{\trans} Y.
  \end{equation*}
  A stabilizing controller is then given by $K = \Kr V^{\trans}$,
  where $\Kr$ is a controller that stabilizes all linear models that describe
  the projected data $\datatripr$.
\end{proposition}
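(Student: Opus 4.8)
The plan is to reduce the statement to \Cref{prp:informdata} applied to the projected data $\datatripr$: I will show that the linear models describing $\datatripr$ are exactly the projections of the linear models describing $\datatrip$, and that the associated closed-loop matrices agree after projection onto $\Vcal$, so that informativity over $\Vcal$ becomes ordinary informativity of $\datatripr$.

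First I would record the structural consequences of the hypothesis that every trajectory of the model lies in $\Vcal$. The columns of $X$ are sampled states and hence lie in $\Vcal$, while the columns of $Y$ are time derivatives of trajectories (continuous time) or one-step shifts of states (discrete time) and therefore lie in $\Vcal$ as well. Writing $V V^{\trans}$ for the orthogonal projector onto $\Vcal$, this gives $X = V\Xr$ and $Y = V\Yr$ with $\Xr = V^{\trans} X$, $\Yr = V^{\trans} Y$. From these identities the model correspondence follows: if $Y = AX + BU$, then multiplying by $V^{\trans}$ and substituting $X = V\Xr$ yields $\Yr = (V^{\trans} A V)\Xr + (V^{\trans} B)U$, so $(V^{\trans} A V, V^{\trans} B)$ describes $\datatripr$; conversely, if $\Yr = \Ar\Xr + \Br U$, then $A := V\Ar V^{\trans}$ and $B := V\Br$ satisfy $AX + BU = V(\Ar\Xr + \Br U) = V\Yr = Y$, so $(A,B)$ describes $\datatrip$, and $V^{\trans} A V = \Ar$, $V^{\trans} B = \Br$. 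Hence $(A,B) \mapsto (V^{\trans} A V, V^{\trans} B)$ maps the set of models describing $\datatrip$ \emph{onto} the set of models describing $\datatripr$.

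Next I would relate stabilization over $\Vcal$ to stabilization of $\datatripr$. For a feedback $K = \Kr V^{\trans}$ and any model $(A,B)$ describing $\datatrip$, the projected closed-loop matrix is
\begin{equation*}
  V^{\trans}(A + B\Kr V^{\trans})V = V^{\trans} A V + (V^{\trans} B)\Kr = \Ar + \Br\Kr,
\end{equation*}
where $(\Ar,\Br) = (V^{\trans} A V, V^{\trans} B)$ is the projected model. By the surjectivity above, $K = \Kr V^{\trans}$ makes the projected closed loop stable for every model describing $\datatrip$ if and only if $\Kr$ stabilizes every model describing $\datatripr$. Taking this as the meaning of ``informative for stabilization over $\Vcal$'' — only the behavior on $\Vcal$ matters, so it suffices to consider controllers of the form $\Kr V^{\trans}$ — the data $\datatrip$ are informative for stabilization over $\Vcal$ if and only if such a $\Kr$ exists, i.e.\ if and only if $\datatripr$ satisfies \Cref{prp:informdata}. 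A concrete $\Kr$ is then delivered by \Cref{prp:informdata}(a) or (b) applied to $\datatripr$, and $K = \Kr V^{\trans}$ is the claimed controller. That $K$ is genuinely (locally) stabilizing follows since the reduced model with state $\xr$ from the hypothesis is one of the models describing $\datatripr$, hence is stabilized by $\Kr$; the closed-loop input $u = Kx = \Kr V^{\trans}(V\xr) = \Kr\xr$ keeps the trajectory in $\Vcal$, so the reduced state obeys $\ddt \xr = (\Ar + \Br\Kr)\xr$ (or its discrete-time analogue) with $\Ar + \Br\Kr$ stable, giving $\xr(t)\to 0$ and $x(t) = V\xr(t)\to 0$.

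The step I expect to carry the real weight is not any single computation — those are routine once $V^{\trans}V = I$ and $X = V\Xr$, $Y = V\Yr$ are in hand — but rather fixing the correct formalization of ``informative for stabilization over $\Vcal$'' so that the reduction to \Cref{prp:informdata} is a genuine equivalence; in particular, justifying that restricting attention to controllers $K = \Kr V^{\trans}$ and to the projected closed-loop matrix $V^{\trans}(A+BK)V$ loses no generality.
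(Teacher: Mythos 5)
The paper states this proposition without proof, importing it from~\cite{WerP24}, so there is no in-paper argument to compare against; judged on its own, your reconstruction is correct. It rests on exactly the mechanism the paper itself deploys later in the proof of \Cref{lmm:informdata}: the identities $X = V\Xr$, $Y = V\Yr$ (valid because $\Vcal$ contains all trajectories, hence is $A$-invariant and contains the range of $B$), the resulting surjective correspondence $(A,B)\mapsto(V^{\trans}AV,\,V^{\trans}B)$ between models describing $\datatrip$ and models describing $\datatripr$, and the closed-loop identity $V^{\trans}(A+B\Kr V^{\trans})V=\Ar+\Br\Kr$. Your closing caveat is the right one to flag: the equivalence is only as sharp as the formalization of ``informative for stabilization over $\Vcal$,'' and the one you adopt --- restricting to feedbacks of the form $K=\Kr V^{\trans}$ and judging stability of the projected closed loop --- is precisely the definition used in~\cite{WerP24}, so no generality is lost and the reduction to \Cref{prp:informdata} is a genuine equivalence.
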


\Cref{prp:lowdimdata} allows to design a stabilizing controller $K$ for an
$\nh$-dimensional model using only $\nr$ many data samples.
The state equality in \Cref{prp:lowdimdata} is necessary to give a guarantee
for the stabilization property; however, in practice it has been observed that
already an approximate relation $x(t) \approx V \xr(t)$ for some dimension
$r < \nmin$, where $\nmin$ is the intrinsic system dimension, is sufficient to
design a stabilizing controller via \Cref{prp:lowdimdata}.
The application of the stabilization theory to nonlinear systems is
possible via system linearization based on~\cref{eqn:linearization}.
Under the assumption that the model $f$ is analytic, it holds that for
$\xn(t)$ staying close enough to the steady state $\xs$, a controller that
stabilizes the corresponding linearization~\cref{eqn:linsys} also stabilizes the
nonlinear system; see~\cite[Prop.~3.1]{WerP23a}.
With a similar argument, we can use the shifted data $\datatript$ as
approximately linear data because the relation~\cref{eqn:errdata} holds and
design a stabilizing controller from \Cref{prp:informdata,prp:lowdimdata}.
The resulting method for low-dimensional controller inference of nonlinear
systems is summarized in \Cref{alg:ci}.
The algorithm operates on the data from the nonlinear model~\cref{eqn:sys_ct}
or~\cref{eqn:sys_dt}.
We call the controller inferred by \Cref{alg:ci} stabilizing over the subspace
$\Vcal$ with basis $V$ as its projection onto the subspace via $\Kr = K V$
is stabilizing all nonlinear systems with linearizations that describe the
shifted and projected data
\begin{equation*}
  U = \Un - \us, \quad
  \Xr = V^{\trans} (\Xn - \xs) \quad \text{and} \quad
  \Yr = V^{\trans} (\Yn - f(\xs, \us)).
\end{equation*}
Note that the subspace $\Vcal$ can be chosen arbitrarily.
However, a typically suitable choice, which satisfies $x(t) \approx V \xr(t)$,
can be computed directly from the given data $\Xn - \xs$ using, for example,
the singular value decomposition.

\begin{algorithm2e}[t]
  \SetAlgoHangIndent{1pt}
  \DontPrintSemicolon
  \caption{Low-dimensional controller inference.}%
  \label{alg:ci}
  
  \KwIn{High-dimensional data triplet $\datatripn$,
    orthogonal basis matrix~$V$.}
    
  \KwOut{State-feedback controller~$K$.}

  Compute the reduced and shifted data triplet $\datatripr$ via
    \begin{equation*}
      U = \Un - \us, \quad
      \Xr = V^{\trans} (\Xn - \xs) \quad \text{and} \quad
      \Yr = V^{\trans} (\Yn - f(\xs, \us)).
    \end{equation*}\;
    \vspace{-\baselineskip}

  Infer a low-dimensional stabilizing feedback
    $\Kr = U \Theta (\Xr \Theta)^{-1}$ for $\datatripr$ solving
    either~\cref{eqn:lmi_ct} or~\cref{eqn:lmi_dt} for the unknown $\Theta$.\;

  Lift the inferred controller $\Kr$ to the high-dimensional space
    via $K = \Kr V^{\trans}$.\;
\end{algorithm2e}


\subsection{Problem statement}%
\label{sec:problem}

A crucial point in the design of controllers from data is the quality of
the data.
If we have the possibility to influence the generation of data by exciting a
system using initial values and control signals, the question is how to
generate an informative data set for stabilization in the sense
of~\cref{eqn:datatripn}.
Classical choices from system identification~\cite{BouT23, VanD96, WilRMetal05}
such as randomized input signals to generate state trajectories are generic and
do not take the specific task of stabilization into account.
Furthermore, using random input signals provides no way to steer the collection
process and thus can lead to unbounded growth, rendering computations with the
generated data numerically infeasible, and limit cycle behavior, in which case
system states are repeated and therefore the information content of the data is
limited.


\section{Iterative controller inference method}%
\label{sec:method}

We present an approach for generating informative data sets.
We first derive sufficient conditions for system inputs
that lead to informative data.
We then introduce an algorithm that iteratively applies the sufficient
conditions to generate informative data, followed by an analysis of the
convergence of the corresponding algorithm.


\subsection{Iterative controller inference}%
\label{sec:iciidea}

By leveraging low-dimensional data informativity (\Cref{prp:lowdimdata}),
we generate a sequence of controllers with order
$\nr_{1} \leq \nr_{2} \leq \dots \leq \nr_{\nT}$,
\begin{equation*}
  \Kr_{1} \in \R^{\np \times \nr_{1}}, \quad
  \Kr_{2} \in \R^{\np \times \nr_{2}}, \quad
  \ldots, \quad
  \Kr_{\nT} \in \R^{\np \times \nr_{\nT}},
\end{equation*}
which in lifted form locally stabilize the system~\cref{eqn:sys_ct}
or~\cref{eqn:sys_dt} over a sequence of nested subspaces with non-decreasing
dimensions $\nr_{1} \leq \nr_{2} \leq \dots \leq \nr_{\nT}$,
\begin{equation*}
  \Vcal_{1} \subseteq \Vcal_{2} \subseteq \dots \subseteq \Vcal_{\nT}.
\end{equation*}
When the dimension $\nr_{\nT}$ of $\Vcal_{\nT}$ reaches the intrinsic dimension
$\nmin$ of the system~\cref{eqn:sys_ct} or~\cref{eqn:sys_dt}, then the
system is stabilized in the sense of \Cref{sec:stabilization}. 

Each iteration $j = 1, \dots, \nT$ of iterative controller inference (ICI)
proceeds in two steps.
In the first step of iteration $j$, the system is queried to extend the data
triplet from the previous iteration $\datatripnjone$ given by
\begin{equation*}
  \begin{aligned}
    \Un_{j-1} & = \begin{bmatrix} \un_{0}(t_{0}) &
      \un_{1}(t_{1}) & \ldots &
      \un_{j-2}(t_{j-2}) \end{bmatrix}, \\
    \Xn_{j-1} & = \begin{bmatrix} \xn(t_{0}) & \xn(t_{1}) & \ldots &
      \xn(t_{j-2}) \end{bmatrix}, \\
    \Yn_{j-1} & = \begin{bmatrix} f(\xn(t_{0}), \un_{0}(t_{0})) &
      f(\xn(t_{1}), \un_{1}(t_{1})) & \ldots &
      f(\xn(t_{j-2}), \un_{j-2}(t_{j-2})) \end{bmatrix},
  \end{aligned}
\end{equation*}
to the current data triplet $\datatripnj$.
To extend the data triplet, we generate an input signal $u(t)$ as 
\begin{equation} \label{eqn:updateU}
  \un_{j - 1}(t) = K_{j - 1}(\xn(t) - \xs) + \us,
\end{equation}
using the controller $K_{j-1}$ from the previous iteration.
Generating an input signal as in~\cref{eqn:updateU} is motivated by a
sufficient condition for system inputs to lead to informative data, which we
derive in \Cref{sec:adaptiveinform}.
Furthermore, exciting a system with an input given by~\cref{eqn:updateU} 
ensures that the state trajectory will stay close to the steady state $\xs$ as
needed for the linearization~\cref{eqn:linsys} of the nonlinear
system~\cref{eqn:sys_ct} or~\cref{eqn:sys_dt} to remain meaningful.
Once we have the input signal $\un_{j - 1}(t)$ obtained
via~\cref{eqn:updateU}, we query the system~\cref{eqn:sys_ct}
or~\cref{eqn:sys_dt} to obtain its action at time $t_{j - 1}$,
\begin{equation} \label{eqn:sysquery}
  \yn(t_{j-1}) = f(\xn(t_{j-1}), \un_{j-1}(t_{j-1})).
\end{equation}
Then, the new data triplet $\datatripnj$ is given by the matrices
\begin{subequations} \label{eqn:updateMTX}
\begin{align} 
  \Un_{j} & = \begin{bmatrix} \Un_{j-1} & \un_{j-1}(t_{j-1}) \end{bmatrix}, \\
  \Xn_{j} & = \begin{bmatrix} \Xn_{j-1} & \xn(t_{j-1}) \end{bmatrix}, \\
  \Yn_{j} & = \begin{bmatrix} \Yn_{j-1} & \yn(t_{j-1}) \end{bmatrix}.
\end{align}
\end{subequations}
Additionally, the subspace $\Vcal_{j-1}$ is extended to $\Vcal_{j}$ by 
\begin{equation*}
  \Vcal_{j} = \Vcal_{j-1} \cup \mspan(\xn(t_{j-1}) - \xs),
\end{equation*}
except in certain special cases, which are further discussed
in \Cref{sec:ICI:Algorithm}.
To advance the state to $\xn(t_{j})$, we integrate the system in time
using $\un_{j-1}(t)$ from~\cref{eqn:updateU}.
Note that in the case of discrete-time systems~\cref{eqn:sys_dt} and
if the discrete time points for the data sampling are 
only one time step apart so that $t_{j} = t_{j-1} + 1$ holds,
the system query and integration over time are equivalent such that
$\yn(t_{j-1}) = \xn(t_{j})$.
In the first iteration $j = 1$, the initial matrices $\Un_{0}$, $\Xn_{0}$,
and $\Yn_{0}$ are empty, and there is no controller $K_{0}$ available.
Therefore, we use a normally distributed random input signal $u(t)$
to query the system at time $t_{0}$ and to integrate the state from
$\xn(t_{0})$ to $\xn(t_{1})$.
We discuss this in more in detail in \Cref{sec:ICI:Algorithm}.

In the second step of each iteration $j$ of ICI, a new low-dimensional
controller $\Kr_{j} \in \R^{\np \times \nr_{j}}$ corresponding to the subspace
$\Vcal_{j}$ is inferred with low-dimensional controller inference
(see \Cref{alg:ci}) from the current data triplet $\datatripnj$. 
The corresponding lifted controller $K_{j} = \Kr_{j} V_{j}^{\trans}$ stabilizes
the system~\cref{eqn:sys_ct} or~\cref{eqn:sys_dt} over the subspace
$\Vcal_{j}$, which means that it stabilizes all
linearizations~\cref{eqn:linsys} that describe the shifted and projected data
\begin{equation*}
  U_{j} = \Un_{j} - \us, \quad
  \Xr = V_{j}^{\trans} (\Xn_{j} - \xs) \quad \text{and} \quad
  \Yr = V_{j}^{\trans} (\Yn_{j} - f(\xs, \us)).
\end{equation*}
If the dimension of $\Vcal_{j}$ equals the intrinsic dimension $\nmin$ of the
system~\cref{eqn:sys_ct} or~\cref{eqn:sys_dt}, the algorithm terminates and
returns the controller $K_{j}$, which stabilizes the system.
Otherwise, the process is continued with iteration $j + 1$.

This iterative process can be concluded when the dimension of the
constructed subspace $\Vcal_{\nT}$ reaches the intrinsic system dimension
$\nmin$.
Typically, the intrinsic dimension $\nmin$ is unknown for general systems.
However, stagnation in the expansion of the subspace $\Vcal_{\nT}$ indicates
that no further independent data can be generated via simulation and that
$\Vcal_{\nT}$ has reached its maximum dimension $\nmin$.
For the practical use of the method in scarce data applications, we propose to
end the iteration scheme via a criterion of the form
$\lVert \xn(t_{j}) - \xs \rVert < \operatorname{tol}$ with a user-given
tolerance $\operatorname{tol} > 0$, which stops the iterations when the
system trajectories are close enough to the steady state and can be considered
as stabilized.


\subsection{Adaptive inputs for sampling informative data}%
\label{sec:adaptiveinform}

The key step of the ICI method is the generation of the input signal
via~\cref{eqn:updateU}.
We now show under which circumstances this specific way of constructing an
input signal leads to informative data.
We begin by showing that informative data can be obtained only via non-zero
input signals, which allows us to conclude that the input signals play an
essential role in generating informative data.  
For the ease of exposition, we consider for now only systems with linear
dynamics; however, we later apply the concept of linearization to extend these
results to nonlinear systems.

\begin{lemma}%
  \label{lmm:zeroinputs}
  Let $\datatrip$ be a data triplet of a linear model~\cref{eqn:linsys} such
  that $X \in \R^{\nh \times \nT}$ has full row rank
  and the system input samples are $U = 0 \in \R^{\np \times \nT}$.
  If the underlying linear model is unstable, then the data triplet $\datatrip$
  cannot be informative for stabilization by feedback for any $\nT \in \N$.
\end{lemma}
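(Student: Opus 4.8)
The plan is to exploit the fact that, when the input samples vanish, the model equation $Y = AX + BU$ imposes no constraint at all on the input matrix $B$, so arbitrarily ``bad'' models remain consistent with the data. First I would record the consequence of $U = 0$: any pair of matrices $(A, B)$ with $Y = AX + BU$ in fact satisfies $Y = AX$, and since $X$ has full row rank this determines $A$ uniquely. By assumption the triplet $\datatrip$ is sampled from an unstable linear model of the form~\cref{eqn:linsys}, so this unique $A$ is exactly the system matrix of that model and is therefore unstable --- meaning, in the continuous-time case, that $A$ has an eigenvalue with nonnegative real part, and in the discrete-time case that $A$ has an eigenvalue of modulus at least one.

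The key step is then the observation that for \emph{every} $B' \in \R^{\nh \times \np}$ the pair $(A, B')$ also describes the data, since $AX + B'U = AX = Y$; in particular $(A, 0)$ is a valid model for $\datatrip$. Recalling that, by the definition following \Cref{prp:informdata}, informativity for stabilization by feedback requires a \emph{single} controller $K$ that stabilizes \emph{all} linear models describing the data, I would argue by contradiction: applying such a $K$ to the model $(A, 0)$ gives the closed-loop matrix $A + 0 \cdot K = A$, which is unstable, contradicting the stabilization property. Hence no such $K$ exists, and the argument is entirely independent of the number of samples $\nT$, which yields the claim for all $\nT \in \N$.

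I do not expect a genuine obstacle here; the only points that require care are stating the two notions of instability (continuous- versus discrete-time) precisely, and using the correct, model-independent meaning of ``informative for stabilization'' so that substituting $B' = 0$ is legitimate. It is worth noting that the full-row-rank hypothesis on $X$ is used only to pin down $A$ uniquely and to place the setting within \Cref{prp:informdata}(a); the essential mechanism is simply that zero-input data leaves the input matrix $B$ completely unidentified, so that the unforced dynamics $A$ can never be overcome by any fixed feedback.
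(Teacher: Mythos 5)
Your proof is correct, and it reaches the conclusion by a slightly different route than the paper. The paper works through the right-inverse criterion of \Cref{prp:informdata}(a): since $U = 0$ gives $Y = \Jx X$, every right inverse $X^{\dagger}$ yields $Y X^{\dagger} = \Jx X X^{\dagger} = \Jx$, which is unstable, so condition (a) can never be met. You instead argue directly from the definition of informativity stated after \Cref{prp:informdata}: because $U = 0$, the pair $(\Jx, 0)$ is consistent with the data, and no feedback $K$ can stabilize it since the closed loop is $\Jx + 0 \cdot K = \Jx$. Both arguments rest on the same computation $Y = \Jx X$, but yours is marginally more elementary (it does not invoke the equivalence of (a) with informativity, only the definition) and makes the mechanism more transparent---zero-input data leaves $B$ completely unidentified, so the unstabilizable model $(\Jx,0)$ can never be excluded. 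Your closing observation is also accurate: the full-row-rank hypothesis is not actually needed for your version of the argument (it only serves to pin down $A$ uniquely and to place the statement within the framework of \Cref{prp:informdata}(a)), whereas the paper's proof uses it essentially to form the right inverse. Your uniqueness claim for $A$ is likewise sound: $(A - A')X = 0$ with $X$ of full row rank forces $A = A'$.
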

\begin{proof}
  The number of data samples $\nT$ in $X$ must be at least the state-space
  dimension such that $\nT \geq \nh$ for $X$ to have full row rank.
  Then, there exists an $X^{\dagger}$, which is a right inverse of $X$ such
  that $X X^{\dagger} = I_{\nh}$.
  For all right inverses $X^{\dagger}$, it then holds that the product
  $Y X^{\dagger} = \Jx X X^{\dagger} = \Jx$ has the
  same unstable spectrum as the system matrix $\Jx$ for any $\nT \geq \nh$.
  Therefore, there cannot exist a right inverse $X^{\dagger}$ such that
  $Y X^{\dagger}$ has a stable spectrum and, following part~(a) of
  \Cref{prp:informdata}, the $\datatrip$ cannot be informative for stabilization
  for any $\nT \geq \nh$, which concludes the proof.
\end{proof}

\Cref{lmm:zeroinputs} implies that there are non-zero input signals that
generate informative data because otherwise stabilization and control were
generally impossible.
Additionally, the lemma provides a sufficient stability test for models by
reversing the argument: 
If data $\datatrip$ with a zero input signal $U$ is informative for
stabilization then the original model from which the data has been sampled is
stable.

It has been shown in~\cite[Cor.~5]{WerP24} that the minimal number of samples
required for an informative data set is the intrinsic dimension $\nmin$ of the 
system and it has been shown that such data sets exist.
The key step of the proof of~\cite[Cor.~5]{WerP24} is the construction of a
suitable input signal using a stabilizing controller.
We now reverse this approach, which allows us to obtain a sufficient condition
on the choice of the input signal to guarantee the construction of informative
data triplets.
This is shown in the following lemma.

\begin{lemma}%
  \label{lmm:informdata}
  Let $f$ be the model of a linear system from which the data $\datatrip$ has
  been sampled and let $\Vcal_{\min}$ be the $\nmin$-dimensional subspace of
  all model trajectories.
  Also, let $X \in \R^{\nh \times \nT}$ have the row rank $\nmin$.
  If the input signal is chosen such that $U = K X$, where
  $K$ is a controller that stabilizes the system described by $f$,
  then the data $\datatrip$ is informative for stabilization by feedback over
  $\Vcal_{\min}$.
\end{lemma}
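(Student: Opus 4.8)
The plan is to reduce the claim to \Cref{prp:lowdimdata} and then verify part~(a) of \Cref{prp:informdata} for the projected data, since $X$ need not have full row rank in $\R^{\nh}$ when $\nmin < \nh$. Let $V \in \R^{\nh \times \nmin}$ be an orthogonal basis matrix of $\Vcal_{\min}$. Because $\Vcal_{\min}$ is spanned by all trajectories of the linear model, it is invariant under $\Jx$ and contains $\operatorname{im}(\Ju)$, so there are reduced matrices $\Ar = V^{\trans}\Jx V$ and $\Br = V^{\trans}\Ju$ with $\Jx V = V\Ar$ and $\Ju = V\Br$. Every sampled state lies in $\Vcal_{\min}$, hence $X = V\Xr$ with $\Xr = V^{\trans}X$, and since $V$ has orthonormal columns we get $\mrank(\Xr) = \mrank(X) = \nmin$; thus $\Xr \in \R^{\nmin \times \nT}$ has full row rank.

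Next I would express the output data in reduced coordinates. From $Y = \Jx X + \Ju U$ and $X = V\Xr$ it follows that $Y = \Jx V\Xr + \Ju U = V(\Ar\Xr + \Br U)$, so $\Yr := V^{\trans}Y = \Ar\Xr + \Br U$. Now I invoke the hypothesis $U = KX = KV\Xr$: writing $\Kr := KV$ gives $U = \Kr\Xr$, and therefore $\Yr = (\Ar + \Br\Kr)\Xr$.

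The decisive observation is that $\Ar + \Br\Kr$ is the restriction of the closed-loop matrix $\Jx + \Ju K$ to the invariant subspace $\Vcal_{\min}$, since $(\Jx + \Ju K)V = V\Ar + V\Br\Kr = V(\Ar + \Br\Kr)$. Consequently the spectrum of $\Ar + \Br\Kr$ is contained in that of $\Jx + \Ju K$, which is stable because $K$ stabilizes $f$; this reasoning is identical in the continuous- and discrete-time cases. As $\Xr$ has full row rank, any right inverse $\Xr^{\dagger}$ satisfies $\Yr\Xr^{\dagger} = (\Ar + \Br\Kr)\Xr\Xr^{\dagger} = \Ar + \Br\Kr$, which has only stable eigenvalues, so $\datatripr$ meets part~(a) of \Cref{prp:informdata}. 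By \Cref{prp:lowdimdata}, the data $\datatrip$ is informative for stabilization by feedback over $\Vcal_{\min}$, which completes the proof.

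The linear-algebra rewritings are routine; the one point that genuinely needs care is the structural fact that $\Vcal_{\min}$ is $\Jx$-invariant with $\operatorname{im}(\Ju) \subseteq \Vcal_{\min}$ — already implicit in the hypotheses of \Cref{prp:lowdimdata} — because this is what makes $\Ar,\Br$ well defined and forces the reduced closed loop to be a sub-block of the full closed loop. Everything else then follows from full-row-rankness of $\Xr$ together with the elementary fact that the eigenvalues of a matrix restricted to an invariant subspace form a subset of its full spectrum.
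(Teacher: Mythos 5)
Your proof is correct and follows essentially the same route as the paper's: project the data onto $\Vcal_{\min}$, use $U = KX$ to write $\Yr = (\Ar + \Br\Kr)\Xr$, invoke the invariance of $\Vcal_{\min}$ to place the spectrum of the reduced closed loop inside that of $\Jx + \Ju K$, and conclude via part~(a) of \Cref{prp:informdata} together with \Cref{prp:lowdimdata}. Your version is marginally more explicit than the paper's in deriving $Y = V(\Ar\Xr + \Br U)$ from $\Jx$-invariance and $\operatorname{im}(\Ju) \subseteq \Vcal_{\min}$ (the paper simply writes $\Yr = V^{\trans}V\Yr = V^{\trans}Y$, taking $Y = V\Yr$ for granted), but this is a presentational refinement rather than a different argument.
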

\begin{proof}
  Since $f$ describes a linear model, there exist matrices
  $A \in \R^{\nh \times \nh}$ and $B \times \R^{\nh \times \np}$ such that
  \begin{equation*}
    Y = A X + B U
  \end{equation*}
  holds for the given data $\datatrip$.
  Choosing the input samples as $U = KX$ then yields
  \begin{equation} \label{eqn:informdata_tmp1}
    Y = A X + B K X = (A + B K) X.
  \end{equation}
  Because $X$ is assumed to have row rank $\nmin$, it follows that
  $\nT \geq \nmin$.
  Furthermore, there exists an orthogonal basis matrix
  $V \in \R^{\nh \times \nmin}$ of the subspace $\Vcal_{\min}$
  such that $x(t) = V \xr(t)$ holds for all $t \geq 0$, where
  $\xr(t)$ is the state of a linear model of dimension $\nmin$ of the form
  \begin{equation} \label{eqn:informdata_tmp4}
    \dot{\xr}(t) = \Ar \xr(t) + \Br u(t),
  \end{equation}
  with $\Ar \in \R^{\nmin \times \nmin}$ and $\Br \in \R^{\nmin \times \np}$.
  The system matrices in~\cref{eqn:informdata_tmp4} are obtained via projection
  onto $\Vcal_{\min}$ using the orthogonal basis matrix $V$ such that
  $\Ar = V^{\trans}AV$ and $\Br = V^{\trans}B$.
  Via \Cref{prp:lowdimdata}, we know that $\datatrip$ is informative
  for stabilization over $\Vcal_{\min}$ if the projected data set
  $\datatripr$, where $X = V \Xr$ and $Y = V \Yr$, is informative for
  stabilization.
  Inserting the projected data into~\cref{eqn:informdata_tmp1} yields
  \begin{equation} \label{eqn:informdata_tmp2}
    \Yr = V^{\trans} V \Yr = V^{\trans} Y = V^{\trans} (A + BK) X =
      V^{\trans} (A + BK) V \Xr = \Ar \Xr + \Br \Kr \Xr,
  \end{equation}
  where $\Ar = V^{\trans}AV$ and $\Br = V^{\trans}B$ are as above and
  $\Kr = KV$. 
  Because $X$ has row rank $\nmin$ and $\Vcal_{\min}$ contains all state
  trajectories of the model $f$, it follows that
  $\Xr \in \R^{\nmin \times \nT}$ has full row rank $\nmin$ such that
  there exists a right inverse $\Xr^{\dagger}$ of $\Xr$.
  From~\cref{eqn:informdata_tmp2} it then follows that
  \begin{equation} \label{eqn:informdata_tmp3}
    \Yr \Xr^{\dagger} = \Ar + \Br \Kr.
  \end{equation}
  Because $\Vcal_{\min}$ is the subspace of all trajectories of the model $f$,
  the basis matrix $V$ spans an invariant subspace of $A$ such that the
  spectrum of the projection is part of the spectrum of the full
  matrix~\cite{WerP24}.
  Following the assumption that $K$ is a controller that stabilizes the system
  described by the model $f$, we have that the matrix
  \begin{equation*}
    V^{\trans} (A + B K) V = \Ar + \Br \Kr
  \end{equation*}
  is stable, too.
  From~\cref{eqn:informdata_tmp3}, we then get that the product
  $\Yr \Xr^{\dagger}$ is stable.
  By applying part~(a) of \Cref{prp:informdata}, the projected data triplet
  $\datatripr$ is informative for stabilization such that from
  \Cref{prp:lowdimdata}, the result of this lemma follows.
\end{proof}

We now extend the result from \Cref{lmm:informdata} to the more general case
of nonlinear systems.

\begin{theorem}%
  \label{thm:nonlinearstab}
  Let $f$ be a model with unstable steady state $(\xs, \us)$ from which the
  data $\datatripn$ has been sampled, with the state data
  $\Xn = \begin{bmatrix} \xn(t_{1}) & \ldots & \xn(t_{\nT}) \end{bmatrix}
  \in \R^{\nh \times \nT}$ being close to the steady state such that
  $\lVert \xn(t_{j}) - \xs \rVert \leq \epsilon$ holds for some
  $\epsilon > 0$ small enough and $j = 1, \ldots, \nT$.
  Also, let the the shifted data $\Xt$ obtained from $\Xn$
  via~\cref{eqn:shiftdata} have the row rank $\nmin$, the intrinsic dimension of the system.
  If $\Un$ is chosen via~\cref{eqn:K} for a $K$ that stabilizes the
  linearization of $f$, then the data $\datatripn$ is informative for
  local stabilization by linear feedback.
\end{theorem}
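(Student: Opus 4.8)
The plan is to reduce the nonlinear statement to the linear case already handled in \Cref{lmm:informdata} by invoking the linearization machinery set up in \Cref{sec:informinfer}. The essential observation is that choosing $\Un$ via~\cref{eqn:K} for a controller $K$ is, after the shift in~\cref{eqn:shiftdata}, exactly the choice $U = K \Xt$ on the shifted data. So the argument is: first pass to the shifted data $\datatript$; second argue that this data is, up to higher-order terms controlled by $\epsilon$, a data triplet of the linearization~\cref{eqn:linsys}; third apply \Cref{lmm:informdata} to the linearization with the controller $K$, which by hypothesis stabilizes $f$ locally and hence — since $\Jx, \Ju$ are the Jacobians of $f$ at $(\xs,\us)$ — also stabilizes the linear pair $(\Jx, \Ju)$; fourth transport the conclusion back to the nonlinear system via~\cite[Prop.~3.1]{WerP23a}, which says a controller stabilizing the linearization stabilizes the nonlinear system locally.

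Concretely, I would proceed as follows. Set $U = \Un - \us$, $\Xt = \Xn - \xs$, $\Yt = \Yn - f(\xs,\us)$ as in~\cref{eqn:shiftdata}. The input choice~\cref{eqn:K}, $\un(t_j) = K(\xn(t_j) - \xs) + \us$, reads columnwise as $U = K \Xt$. By the linearization identity~\cref{eqn:linearization}, each column of $\Yt$ equals $\Jx(\xn(t_j) - \xs) + \Ju(\un(t_j)-\us)$ plus terms of order $\Ocal(\lVert \xn(t_j)-\xs\rVert^2) + \Ocal(\lVert\un(t_j)-\us\rVert^2)$; since $\lVert \xn(t_j)-\xs\rVert \le \epsilon$ and $\un(t_j)-\us = K(\xn(t_j)-\xs)$ has norm $\le \lVert K\rVert\,\epsilon$, these remainders are $\Ocal(\epsilon^2)$, so $\Yt = \Jx \Xt + \Ju U + \Ocal(\epsilon^2)$. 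Thus $\datatript$ is an $\Ocal(\epsilon^2)$-perturbation of an idealized linear data triplet $\datatrip$ of the pair $(\Jx, \Ju)$, and by~\cref{eqn:errdata} the idealized $X, Y$ agree with $\Xt, \Yt$ up to the same order. Since $\Xt$ has row rank $\nmin$ and rank is stable under small perturbations, $X$ also has row rank $\nmin$ for $\epsilon$ small enough. Because $K$ stabilizes $f$ locally, $K$ stabilizes the linearization $(\Jx,\Ju)$ (the local dynamics near $(\xs,\us)$ of the closed loop $\un = K(\xn-\xs)+\us$ are governed precisely by $\Jx + \Ju K$; local asymptotic stability of the nonlinear closed loop forces $\Jx + \Ju K$ to have no eigenvalue in the closed right half-plane, resp. outside the open unit disk, by the principle of linearized stability). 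Now \Cref{lmm:informdata} applies to the linear model with system function given by $(\Jx, \Ju)$, data $\datatrip$, rank-$\nmin$ state matrix, and input $U = KX$: it yields that $\datatrip$ is informative for stabilization over $\Vcal_{\min}$, and by \Cref{prp:informdata}(a) together with \Cref{prp:lowdimdata} a stabilizing controller for the linearization is recovered from the data. Finally, by~\cite[Prop.~3.1]{WerP23a}, any controller stabilizing the linearization stabilizes the nonlinear system $f$ locally, so $\datatripn$ is informative for local stabilization by linear feedback.

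The main obstacle I anticipate is making the $\epsilon$-perturbation argument fully rigorous rather than heuristic: \Cref{lmm:informdata} is stated for \emph{exactly} linear data, and here we only have $\Yt = \Jx\Xt + \Ju U + \Ocal(\epsilon^2)$. One must be careful that "informative for stabilization" as used in the nonlinear setting (via linearization) is defined so that it is enough to exhibit \emph{one} controller stabilizing the linearization — and that controller is $K$ itself, applied to the recovered reduced model. An honest treatment either (i) invokes the robustness of the informativity/LMI conditions in \Cref{prp:informdata} under small data perturbations — the strict inequalities~\cref{eqn:lmi_ct}/\cref{eqn:lmi_dt} are open conditions, so a $\Theta$ feasible for the idealized linear data remains feasible for the $\epsilon$-perturbed data — or (ii) follows the pattern of~\cite[Prop.~3.1]{WerP23a} directly: show the data constrain the linearization of $f$ enough that $K$ provably stabilizes every linearization consistent with the data, hence stabilizes $f$ near $(\xs,\us)$. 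I would use route (i), citing the openness of the conditions, since that keeps the proof short and parallels the structure already used in the paper. The remaining steps (columnwise rewriting, the rank-stability remark, the citation chain \Cref{lmm:informdata} $\to$ \Cref{prp:lowdimdata} $\to$ \cite[Prop.~3.1]{WerP23a}) are routine.
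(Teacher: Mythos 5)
Your proposal is correct and follows essentially the same route as the paper: shift the data via~\cref{eqn:shiftdata}, treat the shifted triplet as (approximately) linear data of the linearization~\cref{eqn:linsys}, apply \Cref{lmm:informdata} with the locally stabilizing $K$ (noting $U = K\Xt$ columnwise), and lift the conclusion back to the nonlinear system via the results of~\cite{WerP23a}. The only difference is one of emphasis: the paper dispatches the $\Ocal(\epsilon^{2})$ perturbation step tersely by citing~\cite[Sec.~10.1]{NijV16} and assuming the shifted data is exactly described by the linearization, whereas you make the rank-stability and openness-of-the-LMI-conditions arguments explicit --- a welcome addition of rigor, but not a different proof.
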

\begin{proof}
  First, we observe that for $\epsilon$ small enough, the shifted data 
  triplet $\datatript$ obtained via~\cref{eqn:shiftdata} approximates
  the idealized data $\datatrip$ of the linearization~\cref{eqn:linsys}
  arbitrarily well such that following the theory in~\cite[Sec.~10.1]{NijV16},
  we may assume that the data $\datatript$ can be described by the
  linearization~\cref{eqn:linsys}.
  Then, the $\nmin$-dimensional subspace $\Vcal_{\min}$ of all trajectories
  of the linearization is spanned by the orthogonal basis matrix resulting
  from a singular value decomposition of the given data since $\Xt$ is
  assumed to have row rank $\nmin$ such that
  \begin{equation*}
    \begin{bmatrix} V_{\min} & V_{2} \end{bmatrix}
      \begin{bmatrix} \Sigma & 0 \\ 0 & 0 \end{bmatrix} W = \Xt,
  \end{equation*}
  where the basis matrix $V_{\min}$ corresponds to the $\nmin$ nonzero
  singular values in $\Sigma$, and $W$ contains the right singular vectors.
  With the choice of $\Un$ via~\cref{eqn:K} with a locally stabilizing
  controller $K$, the assumptions of \Cref{lmm:informdata} are satisfied
  for the linearization~\cref{eqn:linsys}, which means that the shifted data
  triplet $\datatript$ is informative for stabilization over $\Vcal_{\min}$.
  From~\cite[Cor.~1]{WerP23a} it follows that if $\datatript$ is informative
  for stabilization, then $\datatripn$ is informative for local stabilization
  over $\Vcal_{\min}$, which concludes the proof.
\end{proof}

The results of \Cref{thm:nonlinearstab} suggest that it is sufficient to use a
stabilizing input signal for the generation of informative data.
A stabilizing input signal can be obtained, for example, via feedback as
$u(t) = Kx(t)$, where $K$ is a stabilizing controller. 
Additionally, for the use of linearizations, the sampled data has to stay close
to the steady state of interest $(\xs, \us)$ to satisfy the assumptions of
\Cref{thm:nonlinearstab}.
Therefore, dynamical instabilities that drive the system away from the steady
state are undesired and need to be suppressed by a stabilizing control signal.
However, \Cref{lmm:informdata,thm:nonlinearstab} assume a priori access
to a stabilizing controller to generate the control signal for the trajectory
samples.
If such a controller would be available, there would be no need to construct
a controller from data.
We approach this causality dilemma via the ICI method as outlined in
\Cref{sec:iciidea}.
Thereby, we use a sequence of stabilizing controllers over increasingly
higher dimensional subspaces to iteratively update the input signal for the
next sampling step by inferring a suitable controller.


\subsection{Convergence analysis}%
\label{sec:convergence}

Building on the results from \Cref{sec:adaptiveinform}, we now study the
convergence of the ICI method as described in \Cref{sec:iciidea}.
A bound for the maximum number of iteration steps taken by ICI follows under
some mild assumptions directly from the system identification theory.

\begin{corollary}%
  \label{cor:converge}
  Let the iteratively constructed data triplet $\datatripnj$ defined
  in~\cref{eqn:updateMTX} from the ICI method be such that the concatenated
  matrix $\begin{bmatrix} \Xt_{j}^{\trans} & U_{j}^{\trans}
  \end{bmatrix}^{\trans}$ has full column rank for all
  $j = 1, \ldots, \nmin + \np$,
  where $\Xt_{j} \in \R^{\nh \times j}$ and
  $U_{j} \in \R^{\np \times j}$ are shifted data
  matrices~\cref{eqn:shiftdata}, and the columns of $\Xn_{j}$ are
  sufficiently close to the steady state $\xs$ such that
  $\lVert \xn(t_{j}) - \xs \rVert \leq \epsilon$ for $\epsilon > 0$
  small enough.
  Then, the controller inferred by ICI from the data $\datatripnj$
  for $j = \nmin + \np$ is guaranteed to stabilize the system described by the
  model from which the data has been sampled.
\end{corollary}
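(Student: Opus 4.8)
The plan is to extract the iteration bound from a rank count on the accumulated data, and then to verify at the terminating iteration that low-dimensional controller inference (\Cref{alg:ci}) succeeds and returns a stabilizing controller. As in the proof of \Cref{thm:nonlinearstab}, I treat the shifted data $\datatript$ as idealized linear data of the linearization \cref{eqn:linsys}, which is legitimate for $\epsilon$ small by \cite[Sec.~10.1]{NijV16}; the higher-order errors in \cref{eqn:errdata} do not affect the ranks or spectra that appear below. By the subspace update in ICI, $\dim \Vcal_{j} = \mrank(\Xt_{j})$, and since the columns of $\Xt_{j}$ lie in the $\nmin$-dimensional trajectory subspace $\Vcal_{\min}$ we have $\mrank(\Xt_{j}) \leq \nmin$.

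For the iteration bound, subadditivity of the rank under vertical stacking gives, for each $j \in \{1, \dots, \nmin + \np\}$,
\begin{equation*}
  j = \mrank \begin{bmatrix} \Xt_{j} \\ U_{j} \end{bmatrix}
    \leq \mrank(\Xt_{j}) + \mrank(U_{j}) \leq \mrank(\Xt_{j}) + \np ,
\end{equation*}
using the full-column-rank hypothesis and $U_{j} \in \R^{\np \times j}$. At $j = \nmin + \np$ this yields $\mrank(\Xt_{\nmin + \np}) \geq \nmin$, hence $\dim \Vcal_{\nmin + \np} = \mrank(\Xt_{\nmin + \np}) = \nmin$. Since $\Vcal_{1} \subseteq \Vcal_{2} \subseteq \dots$ and ICI terminates exactly when the subspace dimension reaches $\nmin$, the method stops at an iteration $j^{\ast} \leq \nmin + \np$ with $\Vcal_{j^{\ast}} = \Vcal_{\min}$; if it has not stopped earlier then $j^{\ast} = \nmin + \np$.

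It remains to verify that the controller inferred at iteration $j^{\ast}$ is stabilizing. There $\Xr = V_{j^{\ast}}^{\trans} \Xt_{j^{\ast}}$ has full row rank $\nmin$, and, since $V_{j^{\ast}}$ has orthonormal columns, the full-column-rank hypothesis carries over to $\begin{bmatrix} \Xr^{\trans} & U_{j^{\ast}}^{\trans} \end{bmatrix}^{\trans}$; for $j^{\ast} = \nmin + \np$ this matrix is square and invertible, so the reduced linearization $\Yr = \Ar \Xr + \Br U_{j^{\ast}}$ determines $(\Ar, \Br)$ uniquely, exactly as in the proof of \Cref{lmm:informdata}. Because $\Vcal_{\min}$ is $\Jx$-invariant and contains $\mathrm{range}(\Ju)$, hence the reachable subspace of \cref{eqn:linsys}, local stabilizability of the system --- assumed throughout, as the task of \Cref{sec:stabilization} would otherwise be infeasible --- descends to $(\Ar, \Br)$ by the invariant-subspace spectrum argument of \cite{WerP24}. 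Hence some right inverse $\Xr^{\dagger}$ makes $\Yr \Xr^{\dagger} = \Ar + \Br (U_{j^{\ast}} \Xr^{\dagger})$ stable, so the reduced data is informative for stabilization by part~(a) of \Cref{prp:informdata}, the LMI in \Cref{alg:ci} is feasible, and the lifted controller $K = \Kr V_{j^{\ast}}^{\trans}$ stabilizes the linearization over $\Vcal_{\min}$. As $\dim \Vcal_{\min} = \nmin$ is the full intrinsic dimension and the sampled states stay within $\epsilon$ of $\xs$, \cite[Prop.~3.1, Cor.~1]{WerP23a} then gives local stabilization of the nonlinear system \cref{eqn:sys_ct} or \cref{eqn:sys_dt}.

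The main obstacle is this last step. Unlike in \Cref{lmm:informdata,thm:nonlinearstab}, ICI generates the inputs with a whole sequence of controllers $K_{0}, \dots, K_{j^{\ast} - 1}$, each only stabilizing over a proper subspace $\Vcal_{i} \subsetneq \Vcal_{\min}$, so $U_{j^{\ast}} = K \Xt_{j^{\ast}}$ fails for any single stabilizing $K$ and those results do not apply directly. The fix is to route informativity through the identified model: once $\begin{bmatrix} \Xr^{\trans} & U_{j^{\ast}}^{\trans} \end{bmatrix}^{\trans}$ has full rank, $U_{j^{\ast}}$ restricted to $\ker \Xr$ is a bijection onto $\R^{\np}$, so the data-based gain $U_{j^{\ast}} \Xr^{\dagger}$ ranges over all of $\R^{\np \times \nmin}$ as the right inverse $\Xr^{\dagger}$ varies, and informativity of the reduced data becomes equivalent to stabilizability of $(\Ar, \Br)$ --- which is what was just established. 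A remaining, routine, point is to confirm that the rank hypothesis and every conclusion above, stated for the linearized data, are unaffected by the $\Ocal(\epsilon)$ perturbation relating the linearized and the sampled nonlinear data, for $\epsilon$ small enough.
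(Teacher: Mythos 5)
Your proposal is correct and takes essentially the same route as the paper: the paper's proof is a one-line appeal to the sample bound for system identification over low-dimensional subspaces together with the rank condition on $\begin{bmatrix} \Xt_{j}^{\trans} & U_{j}^{\trans} \end{bmatrix}^{\trans}$, and your argument is a worked-out version of exactly that (rank counting forces $\dim \Vcal_{\nmin+\np} = \nmin$, the square invertible concatenated data matrix identifies the reduced linearization uniquely, and informativity then reduces to stabilizability of the identified pair). Your explicit handling of why \Cref{lmm:informdata} does not apply directly, and the affine parametrization of $U\Xr^{\dagger}$ over right inverses, fill in details the paper leaves to the cited references.
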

\begin{proof}
  The result follows from the sampling number for system 
  identification over low-dimensional subspaces~\cite{VanD96, WerP24}
  and the rank condition on the concatenated matrix consisting of the shifted
  data.
\end{proof}

\Cref{cor:converge} states that under the condition that each generated data
sample contains new system information in terms of the concatenated matrix
rank, the ICI method terminates with a locally stabilizing controller after
$\nmin + \np$ steps.
We note that $\nmin + \np$ steps is only a worst case bound as it corresponds
to the identification of the underlying linearized system.
This worst case iteration bound in \Cref{cor:converge} can be improved under
stricter assumptions as the following theorem shows.

\begin{theorem}%
  \label{thm:converge}
  Let $\Vcal_{j} \subseteq \R^{\nh}$ be a $j$-dimensional subspace and let
  $K_{j - 1} \in \R^{\np \times \nh}$ be a state-feedback
  controller that locally stabilizes the projected system described by
  the model $V_{j} V_{j}^{\trans} f$, for an orthogonal basis $V_{j}$ of
  the subspace $\Vcal_{j}$.
  Given the data triplet $\datatripnj$ from the ICI method as
  in~\cref{eqn:updateMTX}, we assume that the shifted state samples
  $\Xt_{j} \in \R^{\nh \times j}$ obtained from
  $\Xn_{j}$ via~\cref{eqn:shiftdata} have full column rank $j$,
  the columns of $\Xn_{j}$ are sufficiently close to the steady state $\xs$
  in the sense that $\lVert \xn(t_{j}) - \xs \rVert \leq \epsilon$ for
  $\epsilon > 0$ small enough,
  and the input samples $\Un_{j}$ satisfy~\cref{eqn:K} columnwise 
  with the state samples $\Xn_{j}$ and the controller $K_{j - 1}$.
  Then, the data triplet $\datatripnj$ is informative for stabilization
  over $\Vcal_{j}$.
  If $j = \nmin$ and $\Vcal_{j}$ is so that $\xn(t) - \xs \in \Vcal_{j}$ for all
  $\xn(t)$ generated by the model $f$ with
  $\lVert \xn(t_{j}) - \xs \rVert \leq \epsilon$, then the controller inferred
  from $\datatripnj$ is guaranteed to locally stabilize the underlying system.
\end{theorem}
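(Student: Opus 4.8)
The plan is to reduce \Cref{thm:converge} to the already-established results \Cref{lmm:informdata} and \Cref{thm:nonlinearstab}, whose hypotheses are essentially the ones assumed here, and then to invoke \Cref{prp:lowdimdata} together with the nonlinear-to-linear transfer result from~\cite{WerP23a} to obtain the final local-stabilization conclusion. The structure of the argument has two parts, mirroring the two sentences of the statement: first, informativity of $\datatripnj$ for stabilization over $\Vcal_{j}$; second, under the additional hypothesis that $j = \nmin$ and $\Vcal_{j}$ captures all trajectory deviations, the inferred controller is actually locally stabilizing.

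For the first part, I would argue as follows. By the closeness assumption $\lVert \xn(t_{j}) - \xs \rVert \leq \epsilon$ and the linearization estimate~\cref{eqn:errdata}, the shifted data triplet $\datatript$ built from $\datatripnj$ via~\cref{eqn:shiftdata} is, for $\epsilon$ small enough, well approximated by an idealized data triplet $\datatrip$ of the linearization~\cref{eqn:linsys}; as in the proof of \Cref{thm:nonlinearstab}, following~\cite[Sec.~10.1]{NijV16} we may treat $\datatript$ as exactly described by~\cref{eqn:linsys}. Since $\Xt_{j}$ has full column rank $j$, its row rank is $j$; let $V_{j}$ be an orthogonal basis of $\Vcal_{j}$ (equivalently, of $\operatorname{range}(\Xt_{j})$ up to the chosen subspace). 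The input hypothesis $\Un_{j}$ satisfying~\cref{eqn:K} columnwise with $K_{j-1}$ means precisely $U_{j} = K_{j-1} \Xt_{j}$ in shifted coordinates. Because $K_{j-1}$ stabilizes the model over $\Vcal_{j}$, the projected closed-loop matrix $V_{j}^{\trans}(A + B K_{j-1}) V_{j}$ is stable. This is exactly the situation of \Cref{lmm:informdata} applied in the $j$-dimensional subspace $\Vcal_{j}$ (rather than $\Vcal_{\min}$): reduction onto $V_{j}$ gives $\Yr_{j} = \Ar_{j} \Xr_{j} + \Br_{j} \Kr_{j-1} \Xr_{j} = (\Ar_{j} + \Br_{j}\Kr_{j-1})\Xr_{j}$ with $\Xr_{j} = V_{j}^{\trans}\Xt_{j}$ of full row rank $j$, so $\Yr_{j}\Xr_{j}^{\dagger} = \Ar_{j} + \Br_{j}\Kr_{j-1}$ is stable, and part~(a) of \Cref{prp:informdata} yields informativity of the reduced data for stabilization. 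By \Cref{prp:lowdimdata}, $\datatript$ is informative for stabilization over $\Vcal_{j}$, and by~\cite[Cor.~1]{WerP23a} the nonlinear data $\datatripnj$ is informative for local stabilization over $\Vcal_{j}$.

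For the second part, set $j = \nmin$ and suppose $\Vcal_{j}$ contains $\xn(t) - \xs$ for every state generated by $f$ within the $\epsilon$-neighborhood. Then $\Vcal_{j}$ coincides with the subspace $\Vcal_{\min}$ of all trajectory deviations of the linearization, so $V_{j} = V_{\min}$ spans an invariant subspace of the Jacobian $\Jx$, and the reduced linearization on $\Vcal_{\min}$ carries all of the unstable spectrum. The controller $\Kr_{j}$ inferred by \Cref{alg:ci} from $\datatripnj$ stabilizes the reduced linearization, and by \Cref{prp:lowdimdata} the lifted controller $K_{j} = \Kr_{j} V_{j}^{\trans}$ stabilizes the full linearization~\cref{eqn:linsys}. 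Finally, by~\cite[Prop.~3.1]{WerP23a}, a controller stabilizing the linearization about the analytic model $f$ also locally stabilizes the nonlinear system~\cref{eqn:sys_ct} or~\cref{eqn:sys_dt}; hence $K_{j}$ is locally stabilizing, which is the claim.

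The main obstacle — or rather the point requiring the most care — is the first part's bookkeeping of the subspace: \Cref{lmm:informdata} is stated for the full-trajectory subspace $\Vcal_{\min}$, but here we apply it at an intermediate dimension $j \leq \nmin$ over a subspace $\Vcal_{j}$ that need not be $\Jx$-invariant. What makes the argument still go through is that we do not need invariance of $\Vcal_{j}$ for informativity: we only need $\Xr_{j}$ to have full row rank $j$ (guaranteed by full column rank of $\Xt_{j}$) and the reduced closed-loop matrix $\Ar_{j} + \Br_{j}\Kr_{j-1}$ to be stable — and stability of the latter is exactly the meaning of "$K_{j-1}$ stabilizes $f$ over $\Vcal_{j}$". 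The invariance of $\Vcal_{\min}$ enters only in the second part, where it is genuinely needed to transfer the stabilization conclusion from the reduced model to the full linearization and then, via analyticity, to the nonlinear system. I would therefore be careful to phrase the reduction so that the role of each hypothesis — full column rank, closeness to $\xs$, the feedback form of $\Un_{j}$, and (only at the end) the trajectory-spanning property of $\Vcal_{j}$ — is used exactly where it is required and nowhere prematurely.
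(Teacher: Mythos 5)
Your proof is correct and follows essentially the same route as the paper: reduce to \Cref{thm:nonlinearstab} (equivalently, the argument of \Cref{lmm:informdata}) restricted to the subspace $\Vcal_{j}$ to obtain informativity, then use \Cref{prp:lowdimdata} and the linearization-transfer results of~\cite{WerP23a} for the final local-stabilization claim. Your write-up is in fact more careful than the paper's terse two-line proof, in particular in making explicit that the $\Jx$-invariance exploited in \Cref{lmm:informdata} is not needed for the informativity step---stability of the reduced closed-loop matrix is supplied directly by the hypothesis that $K_{j-1}$ stabilizes over $\Vcal_{j}$---and enters only in the second claim where $j = \nmin$.
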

\begin{proof}
  From the construction of the input samples $\Un_{j}$ and the full column
  rank of the state samples $\Xn_{j}$, the conditions of
  \Cref{thm:nonlinearstab} are satisfied for the nonlinear system
  projected onto the low-dimensional subspace $\Vcal_{j}$,
  which is described by the model $V_{j} V_{j}^{\trans} f(x(t), u(t))$.
  Therefore, the data triplet $\datatripnj$ must be informative for
  stabilization over $\Vcal_{j}$.
  Since the controller inferred from the informative data set is stabilizing
  all linearizations that describe the shifted and projected data,
  it follows that if $j = \nmin$ and $\Vcal_{j}$ containing all state
  trajectories generated by $f$, the linearization of the underlying system
  is stabilized by the inferred controller.
  Therefore, the controller is guaranteed to locally stabilize the underlying
  system, which concludes the proof.
\end{proof}

Let us discuss the implications of \Cref{thm:converge}.
First, in contrast to \Cref{thm:nonlinearstab}, we consider in
\Cref{thm:converge} the use of low-dimensional subspaces $\Vcal_{j}$, which
reduce the sample complexity of the task of stabilization to the subspace
dimension.
Under the assumption that the subspace $\Vcal_{j}$ with $j = \nmin$ contains
all possible state trajectories of the sampled model around the steady state
$\xs$ and that the controller $K_{j-1}$ used for the generation of the complete
input signal $\Un_{j}$ is stabilizing over $\Vcal_{j}$, \Cref{thm:converge}
reduces the iterations of ICI from $\nmin + p$ in \Cref{cor:converge}
to $\nmin$.

We note that \Cref{thm:converge} critically relies on the assumption that
$K_{j-1}$ stabilizes over the subspace $\mathcal{V}_{j}$.
If this assumption does not hold, then our ICI method is still applicable but
it is not guaranteed to give a minimal data set.
Furthermore, considering the assumptions made in \Cref{thm:converge}, we have
to note that in the ICI method as presented in \Cref{sec:iciidea}, the
assumption that the same $K_{j-1}$ has been used for the generation of
$\Un_{j}$ will not be satisfied since the controller is updated in every
step of the method.
However, we provide a numerical extension of the method to satisfy this
assumption with further details in \Cref{sec:reproj}.
Also, we can see that the assumption of $K_{j-1}$ being stabilizing
over $\Vcal_{j}$ can be interpreted as $\Vcal_{j}$ not containing
more unstable dynamics than $\Vcal_{j-1}$.
To see this, we use from \Cref{sec:iciidea} the fact that in ICI, we have
that $\Vcal_{j-1} \subseteq \Vcal_{j}$ holds for all $j \geq 1$, and we assume
for simplicity of presentation that the corresponding basis matrices
$V_{j-1}$ and $V_{j}$ are computed via the Gram-Schmidt procedure so that
the columns of $V_{j-1}$ are the first $\nr_{j-1}$ many columns of $V_{j}$.
For the controller obtained in the previous iteration via
$K_{j-1} = \Kr_{j-1} V_{j-1}^{\trans}$, it follows that for all possible states
from the extended subspace $x \in \Vcal_{j}$ it holds
\begin{equation*}
  K_{j-1} x = K_{j-1} V_{j} \xr = \Kr_{j-1} V_{j-1}^{\trans} V_{j} \xr
    = \begin{bmatrix} \Kr_{j-1} & 0 \end{bmatrix} \xr 
    = \Kr_{j-1} \begin{bmatrix} \xr_{1} \\ \vdots \\ \xr_{j-1} \end{bmatrix}
    + 0.
\end{equation*}
This shows that the controller $K_{j-1}$ takes the same actions for states
from $\Vcal_{j}$ as for states from $\Vcal_{j-1}$, and if $\Vcal_{j}$ does
contain unstable dynamics that are not contained in $\Vcal_{j-1}$, then
$K_{j-1}$ cannot be stabilizing.
Note that if $K_{j-1}$ is not stabilizing over $\Vcal_{j}$, this does not
imply that there is no $K_{j}$ stabilizing over $\Vcal_{j}$ or that the
corresponding data triplet $\datatripnj$ is not informative for
stabilization; see the discussions in~\cite{WerP24}.
We discuss the handling of intermediate non-informative data sets are
in \Cref{sec:ICI:Algorithm}.


\subsection{Algorithm description}%
\label{sec:ICI:Algorithm}

\begin{algorithm2e}[t]
  \SetAlgoHangIndent{1pt}
  \DontPrintSemicolon
  \caption{Iterative controller inference (ICI).}%
  \label{alg:ici}
  
  \KwIn{Queryable and integrable model $f$,
    steady state $(\xs, \us)$,
    initial value~$\xn(t_{0})$,
    time discretization scheme $t_{0} < t_{1} < \ldots < t_{\nT}$.}
  \KwOut{State-feedback controller $K_{\nT}$.}

  Initialize $U_{0} = [~]$, $X_{0} = [~]$, $Y_{0} = [~]$,
    $V_{0} = [~]$ and $K_{0} = 0$.\;

  \For{$j = 1$ \textbf{\emph{to}} $\nT$}{
    \eIf{$K_{j-1} == 0$}{
      Choose a random input signal $\un_{j-1}(t)$ with expectation
        $\E(\un_{j-1}(t)) = \us$.\;
        \label{alg:ici_randU}
    }{
      Generate the next control signal
        $\un_{j-1}(t) = K_{j-1} (\xn(t) - \xs) + \us$.\;
        \label{alg:ici_stabU}
    }

    Query the model $f$ at $\xn(t_{j-1})$ and $\un_{j-1}(t_{j-1})$ to
      obtain $\yn(t_{j-1}) = f(\xn(t_{j-1}),
      \un_{j-1}(t_{j-1}))$.\;
      \label{alg:ici_query}

    Integrate the model $f$ from $t_{j-1}$ to $t_{j}$ starting at
      $\xn(t_{j-1})$ and using the input signal~$\un_{j-1}(t)$ to obtain
      $\xn(t_{j})$.\;
      \label{alg:ici_step}

  \eIf{$\Vcal_{j-1} == \{ 0 \}$ \textbf{\emph{or}}
    $K_{j-1}$ stabilizing over $\Vcal_{j-1}$}{
    Update the projection space $\Vcal_{j}$ via
      $V_{j} = \orth(V_{j-1}, \xn(t_{j-1}) - \xs)$.\;
      \label{alg:ici_extend}
  }{
    Keep $V_{j} = V_{j-1}$.\;
      \label{alg:ici_keep}
  }

    Update the data triplet $\datatripnjone$ as
      \begin{equation*}
        \begin{aligned}
          \Un_{j} & = \begin{bmatrix} \Un_{j-1} & \un_{j-1}(t_{j-1})
            \end{bmatrix}, \\
          \Xn_{j} & = \begin{bmatrix} \Xn_{j-1} & \xn(t_{j-1}) \end{bmatrix}, \\
          \Yn_{j} & = \begin{bmatrix} \Yn_{j-1} & \yn(t_{j-1}) \end{bmatrix}.
        \end{aligned}
      \end{equation*}\;
      \label{alg:ici_data}
      \vspace{-\baselineskip}

    Infer the next controller $K_{j}$ from $\datatripnj$ and $V_{j}$
      using \Cref{alg:ci}.\;
      \label{alg:ici_ci}

    \If{no stabilizing $K_{j}$ found}{
      Overwrite data $\datatripnj$ via \Cref{alg:reproj} and
        update~$K_{j}$.\;
      \label{alg:ici_reproj}
    }
  }
\end{algorithm2e}

The complete \emph{Iterative Controller Inference (ICI)} method is summarized
in \Cref{alg:ici} following the concepts introduced in the previous sections.

In the first steps of the iteration, there is no stabilizing controller
known yet.
Thus, we propose to generate the first data samples using random inputs
$\un(t)$.
These inputs should still be centered about the steady state control
$\us$ to generate meaningful data such that we assume that in expectation it
holds $\E(\un(t)) = \us$.
Furthermore, in the first few iterations, there is no guarantee that the
current data triplet $\datatripnj$ is informative for stabilization over the
current subspace $\Vcal_{j}$.
In this case, we keep the subspace dimension constant over multiple iterations
until the data triplet becomes informative.
This happens eventually after $\np$ additional steps under the assumption that
the state samples are linearly independent since with a subspace of dimension
$\nr_{j}$ and $\nr_{j} + \np$ data samples, we can identify a unique linear
model in $\Vcal_{j}$ that describes the given data using system
identification~\cite{VanD96}.

To start the ICI method, an arbitrary initial condition
$x(t_{0}) \in \Xcal_{0}^{\operatorname{n}}$ is sufficient as long as it is
close enough to the steady state $\xs$ in the sense that
$\lVert x(t_{0}) - \xs \rVert$ is small enough such that the underlying system
can be described well by its linearization~\cref{eqn:linsys}.
A recommended choice is the steady state itself $\xn(t_{0}) = \xs$.

One motivation for the use of ICI is that trajectories constructed by the
method are expected to remain in the surrounding of the steady state $\xs$
in the sense that $\lVert x(t_{j}) - \xs \rVert$ is small for
$j = 0, \ldots, \nT$.
This property is also beneficial for other numerical problems that are
solved via linearizations.
The following remark discusses the use of \Cref{alg:ici} for general system
identification.

\begin{remark}%
  \label{rmk:sysid}
  Data generated via a single linear stabilizing controller does not allow for
  linear system identification since
  \begin{equation*}
    \mrank(X) = \mrank\left(\begin{bmatrix} X \\ K X \end{bmatrix}\right)
      \leq \nh < \nh + \np
  \end{equation*}
  holds for any data matrix $X$ and fixed feedback $K$; see~\cite{VanD96}.
  However, the data generated by \Cref{alg:ici} is typically well suited
  nevertheless for system identification.
  First, only $\np$ additional iteration steps without subspace extension
  are needed to satisfy the rank conditions of the data matrices over the low
  dimensional subspace.
  These additional steps need to generate independent data, which can typically
  be obtained using random input signals.
  Second, as for the construction of controllers based on linearization theory,
  data must be generated close to the steady state of interest to yield accurate
  results in linear system identification.
  Therefore, the use of stabilizing controllers in the data generation results
  in the desired behavior of trajectories to remain in the vicinity of the
  steady state.
\end{remark}

The ICI method as shown in \Cref{alg:ici} contains a numerical extension in
Line~\ref{alg:ici_reproj} that aims to improve the numerical stability of the
method and allows for the constructed data triplets to satisfy the assumptions
made in \Cref{thm:converge}.
This will be discussed in the following section.


\subsection{Numerical stability and re-projections}%
\label{sec:reproj}

\begin{algorithm2e}[t]
  \SetAlgoHangIndent{1pt}
  \DontPrintSemicolon
  \caption{Data collection via re-projection with informed controls.}%
  \label{alg:reproj}
  
  \KwIn{Basis matrix $V = \begin{bmatrix} v_{1} & \ldots & v_{r} \end{bmatrix}
    \in \R^{\nh \times \nr}$,
    steady state $(\xs, \us)$,
    queryable system $f\colon \R^{\nh} \times \R^{\np} \to \R^{\nh}$,
    state scaling $\alpha > 0$,
    number of data samples to generate $\nT$,
    optional controller $K \in \R^{\np \times \nh}$.}
    
  \KwOut{Re-projected data triplet $\datatripn$.}

  Initialize $\Un = [~]$, $\Xn = [~]$, $\Yn = [~]$ and $k = 1$.\;

  \While{$k \leq \nT$}{
    \eIf{$k \leq r$}{
      Normalize $\displaystyle
        x = \alpha \frac{v_{k}}{\lVert v_{k} \rVert_{2}}$.\;
        \label{alg:reproj_basis}
    }{
      Compute $\displaystyle \xt = \sum_{j = 1}^{r} \beta_{j} v_{j}$,
        with random coefficients $\beta_{j}$.\;
        \label{alg:reproj_rnd}

      Normalize $\displaystyle x = \alpha \frac{\xt}{\lVert \xt \rVert_{2}}$.\;
    }

    \eIf{controller $K$ is given}{
      Compute the input signal $\un = K x + \us$.\;
        \label{alg:reproj_input}
    }{
      Choose a random input signal $\un$ with expectation $\E(\un) = \us$.\;
        \label{alg:reproj_input2}
    }

    Query the system $\yn = f(x + \xs, \un)$.\;

    Update the data matrices
    \begin{equation*}
      \Un = \begin{bmatrix} \Un & \un \end{bmatrix}, \quad
      \Xn = \begin{bmatrix} \Xn & x + \xs \end{bmatrix}, \quad
      \Yn = \begin{bmatrix} \Yn & \yn \end{bmatrix}.
    \end{equation*}\;
    \label{alg:reproj_datatrip}
    \vspace{-\baselineskip}

    \If{$k + 1 \geq r$ \textbf{and}
      stabilizing $K$ can be inferred from $\datatripn$%
      \label{alg:reproj_test}}
    {\textbf{break}}

    Increment $k \gets k + 1$.\;
  }
\end{algorithm2e}

It has been observed in~\cite{WerP24} that controller inference (\Cref{alg:ci})
can become numerically unstable with increasing amounts of data due to the poor
conditioning of the data matrices obtained from unstable systems.
The proposed remedy to the numerical instability is the use of a
re-projection scheme, which improves the quality of the data used for controller
inference.
\Cref{alg:reproj} shows such a re-projection scheme in the context of our
adaptive sampling method for nonlinear dynamical systems.

The general goal of \Cref{alg:reproj} is the construction of well-conditioned
data triplets $\datatripn$ for the use in ICI (\Cref{alg:ici}).
To this end, for a given basis matrix $V \in \R^{\nh \times \nr}$ of an
$\nr$-dimensional subspace $\Vcal$ and a queryable function
$f\colon \R^{\nh} \times \R^{\np} \to \R^{\nh}$, the data triplet $\datatripn$
is computed so that the first $\nr$ columns of $\Xn$ are linearly independent
and that all columns of $\Xn - \xs$ lie in $\Vcal$.
Additionally, if $V$ is an orthogonal basis, we have that for the projected
data it holds that
\begin{equation} \label{eqn:reprojdata}
  V^{\trans} (\Xn - \xs) = \begin{bmatrix} \alpha I_{r} & \Xt_{2} \end{bmatrix}.
\end{equation}
The scaling parameter $\alpha > 0$ is used to control the distance of the
data samples $x$ from the given steady state $\xs$.
This is needed for the generated data to satisfy the assumption used in
linearization theory, namely that $\lVert x \rVert = \lVert \xn - \xs \rVert$
is small, for $\xn$ being the state of the nonlinear model described by $f$.
When $\datatripn$ is used in \Cref{alg:ci}, then the projected
data~\cref{eqn:reprojdata} appears in the linear matrix inequalities from
\Cref{prp:informdata} and it has been observed in~\cite{WerP24} that the
identity matrix $I_{r}$ in the first block of~\cref{eqn:reprojdata} is
numerically beneficial.

In the case that a controller $K$ is given, \Cref{alg:reproj} uses this
controller to generate the input sequence in $\Un$ and to sample the data for
$\Yn$.
This yields further advantages and is important in the context of ICI.
First, using the controller $K$ to generate the columns of $\Un$ allows us
to satisfy the conditions of \Cref{thm:converge} to provide informative data
triplets.
Second, if the given controller $K$ is stabilizing over the subspace $\Vcal$
corresponding to the basis matrix $V$, the generated data samples in $\Yn$
will be either close to the steady state $\xs$ in the discrete-time case or
will be small in norm in the continuous-time case.
Therefore, the data in $\Yn$ is suitable for linearization-based methods such
as controller inference.
Finally, the use of a controller $K$ in \Cref{alg:reproj} leads to the
following result, which allows to potentially reduce the number of samples in
the generated data triplet.

\begin{theorem}%
  \label{thm:reproj}
  Let $f$ be a model with the steady state $(\xs, \us)$, and let $\Vcal$
  be an $\nr$-dimensional subspace with orthogonal basis matrix
  $V \in \R^{\nh \times \nr}$ over which $K = \Kr V^{\trans}$ is
  a locally stabilizing controller.
  If the data triplet $\datatripn$ is such that for the columns of $\Xn$ we
  have $\xn_{j} = \alpha v_{j} + \xs$ with $\alpha> 0$ so that
  $\lVert \xn_{j} - \xs \rVert \leq \epsilon$ holds for some $\epsilon > 0$
  small enough and $j = 1, \ldots, \nT$, and the columns of $\Un$ are obtained
  via~\cref{eqn:K}, then $\datatripn$ is an informative data set for
  stabilization over $\Vcal$ with $\nT = \nr$ many data samples.
\end{theorem}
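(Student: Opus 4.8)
The plan is to reduce this to \Cref{thm:nonlinearstab} by verifying its hypotheses for the specifically structured data triplet produced by \Cref{alg:reproj} with $\nT = \nr$. First I would record the structure of the state data: since $\xn_{j} = \alpha v_{j} + \xs$ for $j = 1, \ldots, \nr$ and the $v_{j}$ are the orthonormal columns of $V$, the shifted data matrix is $\Xt = \Xn - \xs \begin{bmatrix} 1 & \ldots & 1 \end{bmatrix} = \alpha V$, so $\Xt = \alpha V \in \R^{\nh \times \nr}$ has full column rank $\nr$, and moreover every column of $\Xt$ lies in $\Vcal$. The closeness assumption $\lVert \xn_{j} - \xs \rVert \leq \epsilon$ is given directly. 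The input columns are $\un_{j} = K(\xn_{j} - \xs) + \us$ by~\cref{eqn:K}, with $K = \Kr V^{\trans}$ locally stabilizing; this is exactly the form required by \Cref{thm:nonlinearstab}.

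The one gap to close is that \Cref{thm:nonlinearstab} asks that $\Xt$ have row rank $\nmin$, i.e. that $\Vcal$ be the \emph{full} trajectory subspace $\Vcal_{\min}$ of the linearization, whereas here $\Vcal$ is merely an $\nr$-dimensional subspace over which $K$ is stabilizing, with possibly $\nr < \nmin$. So I would instead invoke \Cref{lmm:informdata} in its low-dimensional form applied to the \emph{projected} linearization: restricting the linearized dynamics to $\Vcal$ via the orthogonal projector $V V^{\trans}$ gives a linear $\nr$-dimensional model with matrices $\Ar = V^{\trans} \Jx V$, $\Br = V^{\trans} \Ju$, whose state data $\Xr = V^{\trans} \Xt = \alpha I_{\nr}$ has full row rank $\nr$, and whose input data satisfies $U = \Kr \Xr$ with $\Kr$ stabilizing this projected model (since $K = \Kr V^{\trans}$ is stabilizing over $\Vcal$). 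Then the argument of \Cref{lmm:informdata} — writing $\Yr = \Ar \Xr + \Br U = (\Ar + \Br \Kr) \Xr$, taking the right inverse $\Xr^{\dagger}$, and noting $\Yr \Xr^{\dagger} = \Ar + \Br \Kr$ is stable — shows the projected linearized data $\datatripr$ is informative for stabilization. By \Cref{prp:lowdimdata} the linearized data $\datatript$ is then informative for stabilization over $\Vcal$; by~\cite[Cor.~1]{WerP23a} and the $\epsilon$-closeness (as in the proof of \Cref{thm:nonlinearstab}), the nonlinear data $\datatripn$ is informative for local stabilization over $\Vcal$. The sample count is $\nT = \nr$ by construction, which is the claimed minimal size.

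I expect the main obstacle to be purely expositional: carefully justifying that the data generated by \Cref{alg:reproj} with the orthonormal basis $V$ yields $\Xr = \alpha I_{\nr}$ exactly (so that the right inverse is immediate and the rank condition is transparent), and being precise that "$K$ stabilizing over $\Vcal$" means exactly that $\Ar + \Br \Kr = V^{\trans}(\Jx + \Ju \Kr V^{\trans})V$ is a stable matrix — which is the hypothesis we need to feed into the \Cref{lmm:informdata} mechanism. There is no genuine analytic difficulty beyond what \Cref{lmm:informdata,thm:nonlinearstab} already supply; the theorem is essentially the "re-projected, minimal-sample" instantiation of those results, and the proof should be short once the structured form of $\Xn$ and $\Un$ is written down.
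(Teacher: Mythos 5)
Your proposal is correct and follows essentially the same route as the paper: the paper's (very terse) proof likewise reduces to the linear systems restricted to $\Vcal$, notes that $\Xn - \xs$ has row rank $\nr$ and that $\Kr$ stabilizes all such restricted systems, and then invokes \Cref{thm:nonlinearstab} on the restricted problem. Your version merely inlines one level of that citation chain—unfolding the argument through \Cref{lmm:informdata} and \Cref{prp:lowdimdata} with $\Xr = \alpha I_{\nr}$—which has the virtue of making explicit why the row-rank-$\nmin$ hypothesis of \Cref{thm:nonlinearstab} is harmless for the restricted system, a point the paper's one-line application glosses over.
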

\begin{proof}
  By construction of the data triplet, we have that the row rank of
  $\Xn - \xs$ is $\nr$.
  Also, for all linear systems, whose dynamics evolve in $\Vcal$, we have
  that $\nmin \leq \nr$ holds for the intrinsic system dimension.
  Thus, using the assumptions of this theorem, it holds that
  $K V = \Kr$ is a stabilizing controller for all linear systems
  whose dynamics evolve in $\Vcal$ and that are consistent with the given shifted
  data.
  Therefore, we can apply \Cref{thm:nonlinearstab} to the systems restricted
  to the subspace $\Vcal$, which proves that $\datatripn$ is informative
  for local stabilization of $f$ over $\Vcal$ with only $\nT = \nr$
  many data samples.
\end{proof}

\Cref{thm:reproj} states that if a controller, which stabilizes the underlying
system over $\Vcal$, has been given to the re-projection method in
\Cref{alg:reproj}, the re-projected data triplet will be informative for
stabilization over $\Vcal$ already at step $\nr$ of the re-projection scheme.
In other words, the number of data samples in the re-projected data set can
be reduced from $\nT \geq \nr$ to $\nr$ while preserving their informativity.
This is expected to increase the numerical stability as well as to reduce the
computational costs of the ICI method.
However, in particular in the presence of numerical errors, the application of
more than $\nr$ re-projection steps may be desired to further numerically
stabilize the low-dimensional controller inference.
Therefore, in Line~\ref{alg:reproj_rnd} of \Cref{alg:reproj} additional data
may be generated from random samples in $\Vcal$.

The re-projection method from \Cref{alg:reproj} is incorporated into
ICI (\Cref{alg:ici}) as a fall back if no stabilizing controller could be
inferred from the current data triplet.
This is inspired by the observation that in many cases when numerical
instabilities cause the controller inference via linear matrix inequalities to
fail computationally, the data quality improvements via re-projection
can resolve this issue.
Also, note that in this setup, re-projection is not applied in every iteration
step, which keeps the number of overall model queries small.
To further improve efficiency, we have added a testing procedure
to \Cref{alg:reproj} in Line~\ref{alg:reproj_test} that
employs the theory of \Cref{thm:reproj}.
If the number of requested data samples $\nT$ in \Cref{alg:reproj} is larger
than the current subspace dimension $\nr$, we apply the low-dimensional
controller inference method (\Cref{alg:ci}) after the $\nr$-th step of
\Cref{alg:reproj} to test if there exists a stabilizing controller $K$ over
$\Vcal$ and to terminate the re-projection scheme early.
In this case, the size of the data triplet is reduced from $\nT$ to $\nr$
for the next iteration of~ICI.


\section{Numerical experiments}%
\label{sec:experiments}

We demonstrate the proposed ICI method on a series of numerical examples.
The experiments reported in the following have been run on a machine equipped
with an Intel(R) Core(TM) i7-8700 CPU at 3.20\,GHz and with 16\,GB main memory.
The algorithms are implemented in MATLAB 9.9.0.1467703 (R2020b) on
CentOS Linux release 7.9.2009 (Core).
For the solution of the linear matrix inequalities, the disciplined convex
programming toolbox CVX version~2.2, build~1148 (62bfcca)~\cite{GraB08, GraB20}
is used together with MOSEK version 9.1.9~\cite{MOS19} as inner optimizer.
The source codes, data and results of the numerical experiments are
open source/open access and available at~\cite{supWer25}.


\subsection{Experimental setup and summary of results}%
\label{sec:exsetup}

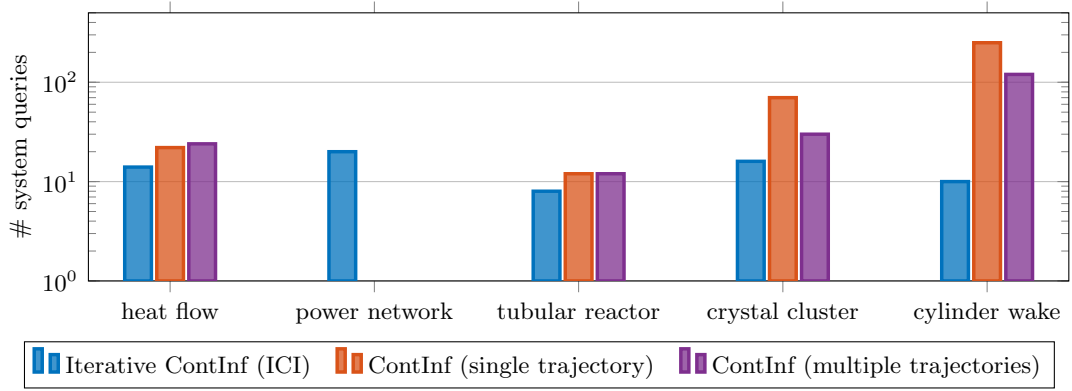
\begin{figure}
  \centering
  \tikzexternalenable%
  \tikzsetnextfilename{overview_samples_dt}%
  \begin{tikzpicture}[font = \plotfontsize]
  \begin{axis}[
    width  = .86\textwidth,
    height = .15\textheight,
    scale only axis,
    ybar,
    bar width    = 10pt,
    ymode        = log,
    ymin         = 1,
    ymax         = 5e+02,
    ylabel       = {\#~system queries},
    ymajorgrids,
    ylabel style = {yshift = -.3em},
    xtick             = data,
    symbolic x coords = {heat~flow, power~network,
      tubular~reactor, crystal~cluster, cylinder~wake},
    legend columns    = 3,
    legend cell align = {left},
    legend style      = {
      at     = {(.46,-0.225)},
      anchor = north,
      /tikz/every even column/.append style = {column sep = 0.25cm}},
    cycle list name = samplelist
  ]
  
  
    \addplot+[ybar] coordinates{
      (heat~flow, 14)
      (power~network, 20)
      (tubular~reactor, 8)
      (crystal~cluster, 16)
      (cylinder~wake, 10)
    };
    \addlegendentry{Iterative ContInf (ICI)}
  
    \addplot+[ybar] coordinates{
      (heat~flow, 22)
      (power~network, NaN)
      (tubular~reactor, 12)
      (crystal~cluster, 70)
      (cylinder~wake, 250)
    };
    \addlegendentry{ContInf (single trajectory)}
  
    \addplot+[ybar] coordinates{
      (heat~flow, 24)
      (power~network, NaN)
      (tubular~reactor, 12)
      (crystal~cluster, 30)
      (cylinder~wake, 120)
    };
    \addlegendentry{ContInf (multiple trajectories)}
  \end{axis}
\end{tikzpicture}%
  \tikzexternaldisable%

  \caption{
    In all experiments, the new iterative controller inference (ICI) method
    needs less system queries to generate data sets informative enough for
    stabilization of the underlying system, with improvement factors up to $25$.
    In the case of the power network example, we were not able to find a setup
    in which classical low-dimensional controller inference (ContInf) with
    unguided data sampling was able to stabilize the system.}
  \label{fig:overview_samples}
\end{figure}

We compare our proposed ICI method from \Cref{alg:ici} to the non-adaptive
low-di\-men\-sion\-al controller inference (ContInf) from \Cref{alg:ci}
using either data samples from a single trajectory or multiple
trajectories, where the simulations have been restarted.
In either case, random input signals drawn from a normal distribution with
expectation $\E(\un(t)) = \us$ have been used to excite the systems for the
data generation.
Due to numerical instabilities using the generated raw data for controller
inference, also the non-adaptive approaches have been equipped with
the re-projection scheme \Cref{alg:reproj}.

An overview about the different experiments for discrete-time systems is shown
in \Cref{fig:overview_samples}.
The reported numbers of system queries include, when necessary, applications
of the re-projection scheme (\Cref{alg:reproj}) for any of the methods.
In all experiments, the new approach needed less system queries than
the state-of-the-art methods by factors ranging from $1.5$ to $25$.
More details are given in the following sections describing the individual
experiments.
For additional experiments with continuous-time systems, the adaptive solvers
from MATLAB's ODE Suite~\cite{ShaR97} have been used for the time integration.
As these do not allow for one-to-one comparisons in terms of system evaluations
due to the adaptive time stepping of the integration methods, only results for
the ICI method are shown in the following.


\subsection{Unstable heat flow}%
\label{sec:heatflow}

The unstable heat flow example models the effects of overheating in
machine tools.
We consider here the \textsf{HF2D5} model from~\cite{Lei04}, in which the
heat equation is discretized in a two-dimensional rectangular domain.
Disturbances are applied to the Laplace operator leading to instabilities.
The example is described by a model of $\nh = 4\,489$ linear ODEs with
$\np = 2$ inputs.
The desired steady state is considered to be $(\xs, \us) = (0, 0)$ and the
test simulations are performed with a constant input signal.

\begin{figure}[t]
  \centering
  \begin{subfigure}[b]{.49\linewidth}
    \centering
  \tikzexternalenable%
  \tikzsetnextfilename{heatflow_ct_nofb}%
  \begin{tikzpicture}[font = \plotfontsize]
  \pgfplotstableread{graphics/data/hf2d5_ct_sim_nofb.dat}\tableSIM
  
  \begin{axis}[%
    name   = states,
    width  = .73\textwidth,
    height = .1\textheight,
    scale only axis,
    xmin = 0,
    xmax = 50,
    ymin = -4e+06,
    ymax = 1e+05,
    xminorticks = false,
    yminorticks = false,
    xlabel = {time $t$},
    ylabel = {outputs $y(t)$},
    ylabel style   = {yshift = -.3em},
    scaled x ticks = false,
    x tick label style = {/pgf/number format/1000 sep={\,}},
    y tick label style = {/pgf/number format/1000 sep={\,}},
    cycle list name    = stateslist
  ]
  
    \foreach \y in {3, 4, ..., 6}{
      \addplot+ table[x index = 0, y index = \y] {\tableSIM};
    }
  \end{axis}
\end{tikzpicture}%
  \tikzexternaldisable%

    \caption{uncontrolled simulation}
    \label{fig:heatflow_ct_nofb}
  \end{subfigure}%
  \hfill%
  \begin{subfigure}[b]{.49\linewidth}
    \centering
  \tikzexternalenable%
  \tikzsetnextfilename{heatflow_ct_ici}%
  \begin{tikzpicture}[font = \plotfontsize]
  \pgfplotstableread{graphics/data/hf2d5_ct_sim_ici.dat}\tableSIM
  
  \begin{axis}[%
    name   = states,
    width  = .73\textwidth,
    height = .1\textheight,
    scale only axis,
    xmin = 0,
    xmax = 50,
    ymin = -15,
    ymax = 15,
    xminorticks = false,
    yminorticks = false,
    xlabel = {time $t$},
    ylabel = {outputs $y(t)$},
    ylabel style   = {yshift = -.3em},
    scaled x ticks = false,
    x tick label style = {/pgf/number format/1000 sep={\,}},
    y tick label style = {/pgf/number format/1000 sep={\,}},
    cycle list name    = stateslist
  ]
  
    \foreach \y in {3, 4, ..., 6}{
      \addplot+ table[x index = 0, y index = \y] {\tableSIM};
    }
  \end{axis}
\end{tikzpicture}%
  \tikzexternaldisable%

    \caption{ICI controller}
    \label{fig:heatflow_ct_ici}
  \end{subfigure}

  \vspace{0\baselineskip}
  \tikzexternalenable%
  \tikzsetnextfilename{heatflow_legend}%
  \begin{tikzpicture}[font = \plotfontsize]
  \begin{axis}[%
    hide axis,
    width  = 1cm,
    height = 1cm,
    scale only axis,
    xmin = 0,
    xmax = 10,
    ymin = 0.5,
    ymax = 1.5,
    legend columns    = -1,
    legend cell align = {left},
    legend style      = {
      at     = {(0,0)},
      anchor = center,
      /tikz/every even column/.append style = {column sep = 0.5cm}}
  ]
    
    \pgfplotsset{cycle list name = stateslist}
    \pgfplotsinvokeforeach{1, 2, ..., 4}{\addplot coordinates {(0,0)};}
    \addlegendentry{output $y_{1}$}
    \addlegendentry{output $y_{2}$}
    \addlegendentry{output $y_{3}$}
    \addlegendentry{output $y_{4}$}
  \end{axis}
\end{tikzpicture}%
  \tikzexternaldisable%

  \caption{Continuous-time unstable heat flow:
    ICI uses $28$ data samples at discrete-time points to construct a
    controller that quickly stabilizes the simulation of the unstable heat
    flow.}
  \label{fig:heatflow_ct}
\end{figure}
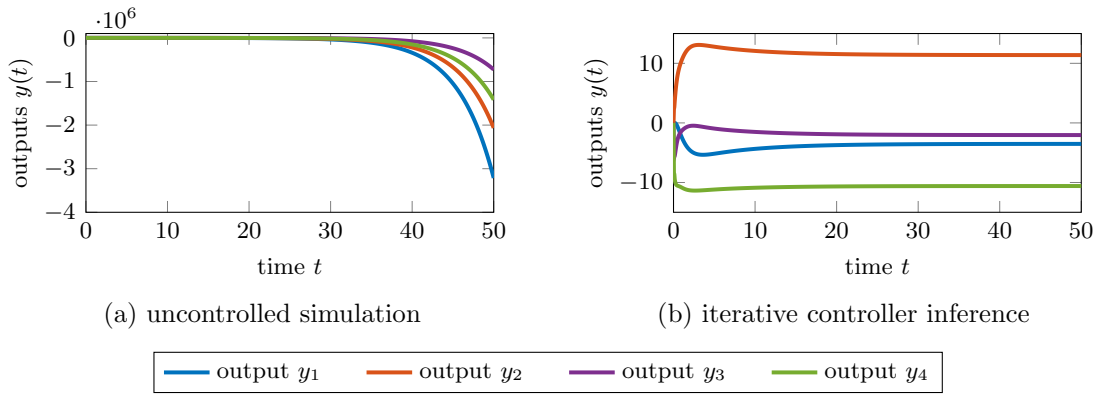

The results for the continuous-time version of the system can be seen in
\Cref{fig:heatflow_ct}.
The ICI method uses $28$ samples generated via the adaptive time integration
routine \texttt{ode45} leading overall to around $4\,024$ model evaluations.
The final simulations of the uncontrolled and stabilized system in
\Cref{fig:heatflow_ct} are computed using the \texttt{ode15s} solver
to illustrate the independence of the computed controller from the chosen time
discretization scheme in the continuous-time case.

\begin{figure}[t]
  \centering
  \begin{subfigure}[b]{.49\linewidth}
    \raggedleft
  \tikzexternalenable%
  \tikzsetnextfilename{heatflow_dt_nofb}%
  \begin{tikzpicture}[font = \plotfontsize]
  \pgfplotstableread{graphics/data/hf2d5_dt_sim_nofb.dat}\tableSIM
  
  \begin{axis}[%
    name   = states,
    width  = .73\textwidth,
    height = .1\textheight,
    scale only axis,
    xmin = 0,
    xmax = 50,
    ymin = -4e+06,
    ymax = 1e+05,
    xminorticks = false,
    yminorticks = false,
    xlabel = {time $\tau \cdot t$},
    ylabel = {outputs $y(\tau \cdot t)$},
    ylabel style   = {yshift = -.3em},
    scaled x ticks = false,
    x tick label style = {/pgf/number format/1000 sep={\,}},
    y tick label style = {/pgf/number format/1000 sep={\,}},
    cycle list name    = stateslist
  ]
  
    \foreach \y in {3, 4, ..., 6}{
      \addplot+ table[x index = 0, y index = \y] {\tableSIM};
    }
  \end{axis}
\end{tikzpicture}%
  \tikzexternaldisable%

    \caption{uncontrolled simulation}
    \label{fig:heatflow_dt_nofb}
  \end{subfigure}%
  \hfill%
  \begin{subfigure}[b]{.49\linewidth}
    \raggedleft
  \tikzexternalenable%
  \tikzsetnextfilename{heatflow_dt_ici}%
  \begin{tikzpicture}[font = \plotfontsize]
  \pgfplotstableread{graphics/data/hf2d5_dt_sim_ici.dat}\tableSIM
  
  \begin{axis}[%
    name   = states,
    width  = .73\textwidth,
    height = .1\textheight,
    scale only axis,
    xmin = 0,
    xmax = 50,
    ymin = -40,
    ymax = 30,
    xminorticks = false,
    yminorticks = false,
    xlabel = {time $\tau \cdot t$},
    ylabel = {outputs $y(\tau \cdot t)$},
    ylabel style   = {yshift = -.3em},
    scaled x ticks = false,
    x tick label style = {/pgf/number format/1000 sep={\,}},
    y tick label style = {/pgf/number format/1000 sep={\,}},
    cycle list name    = stateslist
  ]
  
    \foreach \y in {3, 4, ..., 6}{
      \addplot+ table[x index = 0, y index = \y] {\tableSIM};
    }
  \end{axis}
\end{tikzpicture}%
  \tikzexternaldisable%

    \caption{ICI controller}
    \label{fig:heatflow_dt_ici}
  \end{subfigure}
  \vspace{0\baselineskip}

  \begin{subfigure}[b]{.49\linewidth}
    \raggedleft
  \tikzexternalenable%
  \tikzsetnextfilename{heatflow_dt_sti}%
  \begin{tikzpicture}[font = \plotfontsize]
  \pgfplotstableread{graphics/data/hf2d5_dt_sim_sti.dat}\tableSIM
  
  \begin{axis}[%
    name   = states,
    width  = .73\textwidth,
    height = .1\textheight,
    scale only axis,
    xmin = 0,
    xmax = 50,
    ymin = -1000,
    ymax = 500,
    xminorticks = false,
    yminorticks = false,
    xlabel = {time $\tau \cdot t$},
    ylabel = {outputs $y(\tau \cdot t)$},
    ylabel style   = {yshift = -.6em},
    scaled x ticks = false,
    x tick label style = {/pgf/number format/1000 sep={\,}},
    y tick label style = {/pgf/number format/1000 sep={\,}},
    cycle list name    = stateslist
  ]
  
    \foreach \y in {3, 4, ..., 6}{
      \addplot+ table[x index = 0, y index = \y] {\tableSIM};
    }
  \end{axis}
\end{tikzpicture}%
  \tikzexternaldisable%

    \caption{ContInf (single trajectory)}
    \label{fig:heatflow_dt_sti}
  \end{subfigure}%
  \hfill%
  \begin{subfigure}[b]{.49\linewidth}
    \raggedleft
  \tikzexternalenable%
  \tikzsetnextfilename{heatflow_dt_mti}%
  \begin{tikzpicture}[font = \plotfontsize]
  \pgfplotstableread{graphics/data/hf2d5_dt_sim_mti.dat}\tableSIM
  
  \begin{axis}[%
    name   = states,
    width  = .73\textwidth,
    height = .1\textheight,
    scale only axis,
    xmin = 0,
    xmax = 50,
    ymin = -2e+5,
    ymax = 4e+6,
    xminorticks = false,
    yminorticks = false,
    xlabel = {time $\tau \cdot t$},
    ylabel = {outputs $y(\tau \cdot t)$},
    ylabel style   = {yshift = -.4em},
    scaled x ticks = false,
    x tick label style = {/pgf/number format/1000 sep={\,}},
    y tick label style = {/pgf/number format/1000 sep={\,}},
    cycle list name    = stateslist
  ]
  
    \foreach \y in {3, 4, ..., 6}{
      \addplot+ table[x index = 0, y index = \y] {\tableSIM};
    }
  \end{axis}
\end{tikzpicture}%
  \tikzexternaldisable%

    \caption{ContInf (multiple trajectories)}
    \label{fig:heatflow_dt_mti}
  \end{subfigure}

  \vspace{0\baselineskip}
  \tikzexternalenable%
  \tikzsetnextfilename{heatflow_legend}%
  \begin{tikzpicture}[font = \plotfontsize]
  \begin{axis}[%
    hide axis,
    width  = 1cm,
    height = 1cm,
    scale only axis,
    xmin = 0,
    xmax = 10,
    ymin = 0.5,
    ymax = 1.5,
    legend columns    = -1,
    legend cell align = {left},
    legend style      = {
      at     = {(0,0)},
      anchor = center,
      /tikz/every even column/.append style = {column sep = 0.5cm}}
  ]
    
    \pgfplotsset{cycle list name = stateslist}
    \pgfplotsinvokeforeach{1, 2, ..., 4}{\addplot coordinates {(0,0)};}
    \addlegendentry{output $y_{1}$}
    \addlegendentry{output $y_{2}$}
    \addlegendentry{output $y_{3}$}
    \addlegendentry{output $y_{4}$}
  \end{axis}
\end{tikzpicture}%
  \tikzexternaldisable%

  \caption{Discrete-time unstable heat flow:
    The new ICI approach stabilizes the system after only $14$ system
    evaluations, which is around $1.6$ times less evaluations needed by either
    of the non-adaptive approaches.
    ContInf from a single or four trajectories using
    comparable numbers of system evaluations fail to stabilize the system.}
  \label{fig:heatflow_dt}
\end{figure}
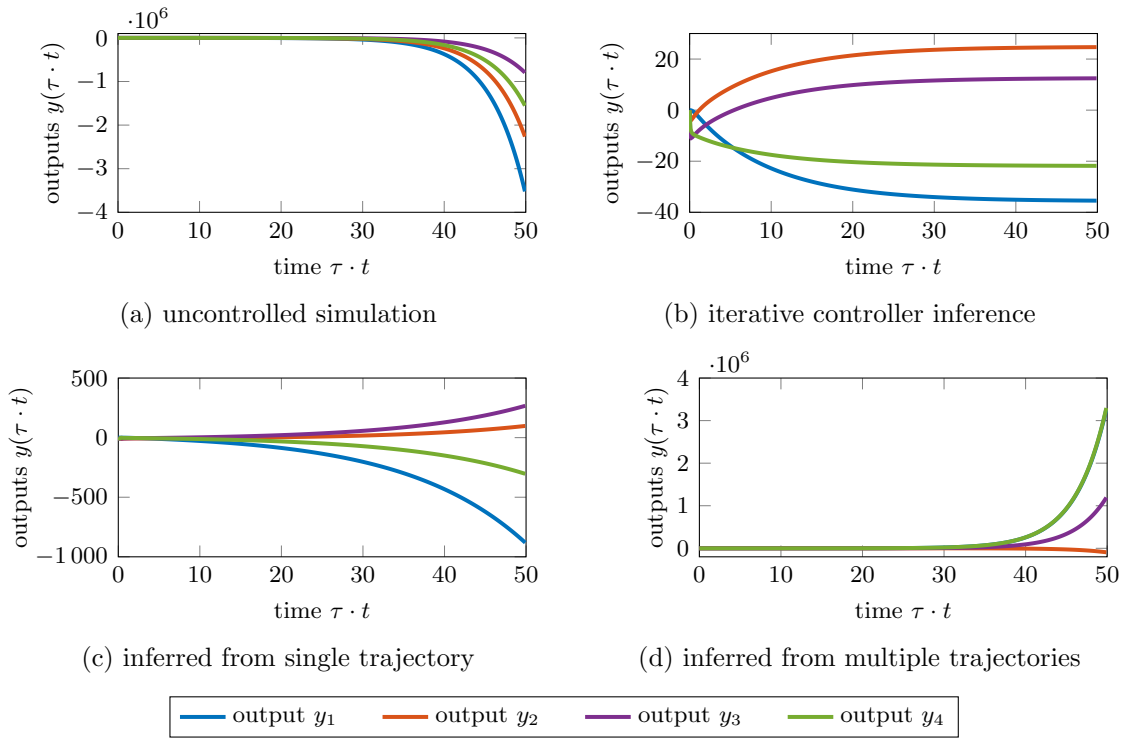

\begin{figure}[t]
  \centering
  \begin{subfigure}[b]{.49\linewidth}
    \centering
  \tikzexternalenable%
  \tikzsetnextfilename{heatflow_convergence_1}%
  \begin{tikzpicture}[font = \plotfontsize]
  \pgfplotstableread{graphics/data/hf2d5_dt_convergence.dat}\tableSIM
  
  \begin{semilogyaxis}[%
    name   = states,
    width  = .75\textwidth,
    height = .115\textheight,
    scale only axis,
    xmin = 0,
    xmax = 50,
    ymin = 1e-17,
    ymax = 5e+1,
    xminorticks = false,
    yminorticks = false,
    xlabel = {iteration steps},
    ylabel = {},
    ylabel style   = {yshift = -.3em},
    scaled x ticks = false,
    x tick label style = {/pgf/number format/1000 sep={\,}},
    y tick label style = {/pgf/number format/1000 sep={\,}},
    cycle list name    = convergencelist
  ]
  
    \foreach \y in {1, 2, 3}{
      \addplot+ table[x index = 0, y index = \y] {\tableSIM};
    }
  \end{semilogyaxis}
\end{tikzpicture}%
  \tikzexternaldisable%

    \caption{small orthogonalization tolerance}
    \label{fig:heatflow_convergence_1}
  \end{subfigure}%
  \hfill%
  \begin{subfigure}[b]{.49\linewidth}
    \centering
  \tikzexternalenable%
  \tikzsetnextfilename{heatflow_convergence_2}%
  \begin{tikzpicture}[font = \plotfontsize]
  \pgfplotstableread{graphics/data/hf2d5_dt_convergence_coarseorth.dat}\tableSIM
  
  \begin{semilogyaxis}[%
    name   = states,
    width  = .75\textwidth,
    height = .115\textheight,
    scale only axis,
    xmin = 0,
    xmax = 50,
    ymin = 1e-9,
    ymax = 5e+1,
    xminorticks = false,
    yminorticks = false,
    xlabel = {iteration steps},
    ylabel = {},
    ylabel style   = {yshift = -.3em},
    scaled x ticks = false,
    x tick label style = {/pgf/number format/1000 sep={\,}},
    y tick label style = {/pgf/number format/1000 sep={\,}},
    cycle list name    = convergencelist
  ]
  
    \foreach \y in {1, 2, 3}{
      \addplot+ table[x index = 0, y index = \y] {\tableSIM};
    }
  \end{semilogyaxis}
\end{tikzpicture}%
  \tikzexternaldisable%

    \caption{large orthogonalization tolerance}
    \label{fig:heatflow_convergence_2}
  \end{subfigure}

  \vspace{0\baselineskip}
  \tikzexternalenable%
  \tikzsetnextfilename{convergence_legend}%
  \begin{tikzpicture}[font = \plotfontsize]
  \begin{axis}[%
    hide axis,
    width  = 1cm,
    height = 1cm,
    scale only axis,
    xmin = 0,
    xmax = 10,
    ymin = 0.5,
    ymax = 1.5,
    legend columns    = -1,
    legend cell align = {left},
    legend style      = {
      at     = {(0,0)},
      anchor = center,
      /tikz/every even column/.append style = {column sep = 0.5cm}}
  ]
    
    \pgfplotsset{cycle list name = convergencelist}
    \pgfplotsinvokeforeach{1, 2, 3}{\addplot coordinates {(0,0)};}
    \addlegendentry{$\lVert x(t_{j+1}) - V_{j}V_{j}^{\trans} x(t_{j+1}) \rVert_{2}$}
    \addlegendentry{$\lVert K_{j+1} - K_{j} \rVert_{2} /
      \lVert K_{j}\rVert_{2}$}
    \addlegendentry{$\lVert x(t_{j+1}) - \xs \rVert_{2}$}
  \end{axis}
\end{tikzpicture}%
  \tikzexternaldisable%

  \caption{ICI convergence behavior for discrete-time unstable heat flow:
    The convergence of ICI is illustrated in terms of the approximation error
    of new states in the current projection space $\Vcal_{j}$ and the
    relative change of the controller between steps.
    The peaks of the projection error indicate re-projections of the data,
    which are regularly needed for accurate orthogonalizations, but not
    if the data is more aggressively truncated with a large orthogonalization
    tolerance.
    In both cases, the distance of the trajectory to the steady state
    decreases, indicating stabilization.}
  \label{fig:heatflow_convergence}
\end{figure}
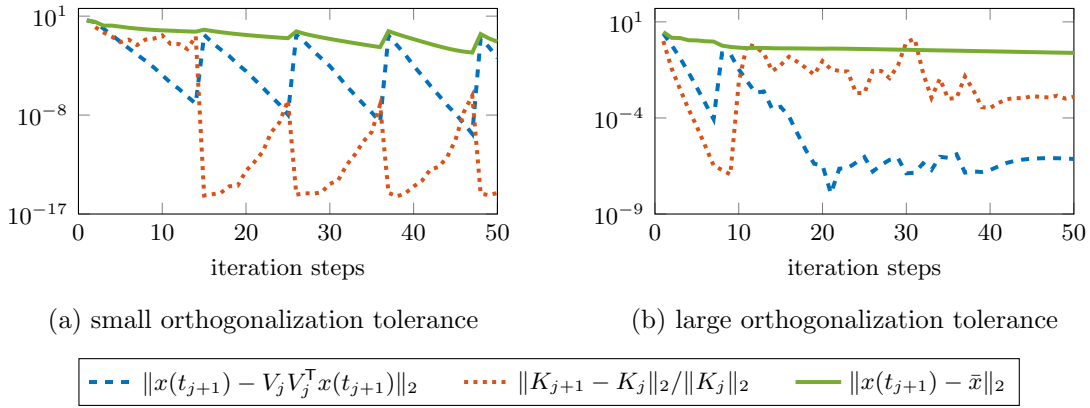

We then used the IMEX Euler scheme with the sampling time $\tau = 0.1$ to
discretize the linear system with the state dynamics implicitly in time and the
inputs explicitly in time.
The results for the three compared methods using similar numbers of model
queries are shown in \Cref{fig:heatflow_dt}, with ICI using $14$ queries,
ContInf from a single trajectory using $14$ model queries and
ContInf from four trajectories using $16$ queries.
Only ICI is able to provide a stabilizing controller for the system with
this small amount of evaluations, while the other two approaches fail.
By increasing the number of model evaluations, the other methods provide
stabilizing controllers after $22$ queries with a single trajectory and $24$
evaluations using four trajectories.

To investigate the general convergence behavior of the proposed ICI method,
we illustrate different quantities of interest appearing during the iterations
including the approximation error of the next state sample in the current
projection space, the relative change of the constructed controllers between
the iteration steps and the distance of the generated state trajectory to the
desired steady state in \Cref{fig:heatflow_convergence}.
The peaks in the projection error in \Cref{fig:heatflow_convergence_1} result
from re-projections applied to the data.
The results indicate that every time the projection error becomes comparably
small, the data becomes numerically challenging for the matrix inequality
solver leading to the re-projection method stepping in to improve the
conditioning of the data.
In \Cref{fig:heatflow_convergence_2}, a larger tolerance for the
orthogonalization method is used such that the constructed projection space
is increasing slower in terms of dimensions and the truncation of data is
smaller.
Therefore, the projection error has a higher minimum level and no
re-projections are needed to keep the data well-conditioned and informative.
Also, we observe that in both
\Cref{fig:heatflow_convergence_1,fig:heatflow_convergence_2} the distances
of the generated trajectories to the steady state of interest are
decreasing.
If no re-projections occur as in \Cref{fig:heatflow_convergence_2}, the
trajectories are monotonically decreasing.
This is not the case when re-projections occur as shown in
\Cref{fig:heatflow_convergence_1}.
However, in this case, the controller appears to be more efficient as the
distance to the steady state descreases faster.


\subsection{Brazilian interconnected power system}

\begin{figure}[t]
  \centering
  \begin{subfigure}[b]{.49\linewidth}
    \raggedleft
  \tikzexternalenable%
  \tikzsetnextfilename{bips_dt_nofb}%
  \begin{tikzpicture}[font = \plotfontsize]
  \pgfplotstableread{graphics/data/bips_dt_sim_nofb.dat}\tableSIM
  
  \begin{axis}[%
    name   = states,
    width  = .73\textwidth,
    height = .1\textheight,
    scale only axis,
    xmin = 0,
    xmax = 500,
    ymin = -100,
    ymax = 800,
    xminorticks = false,
    yminorticks = false,
    xlabel = {time $\tau \cdot t$},
    ylabel = {outputs $y(\tau \cdot t)$},
    ylabel style   = {yshift = -.3em},
    scaled x ticks = false,
    x tick label style = {/pgf/number format/1000 sep={\,}},
    y tick label style = {/pgf/number format/1000 sep={\,}},
    cycle list name    = stateslist
  ]
  
    \foreach \y in {5, 7, 9}{
      \addplot+ table[x index = 0, y index = \y] {\tableSIM};
    }
  \end{axis}
\end{tikzpicture}%
  \tikzexternaldisable%

    \caption{uncontrolled simulation}
    \label{fig:bips_dt_nofb}
  \end{subfigure}%
  \hfill%
  \begin{subfigure}[b]{.49\linewidth}
    \raggedleft
  \tikzexternalenable%
  \tikzsetnextfilename{bips_dt_ici}%
  \begin{tikzpicture}[font = \plotfontsize]
  \pgfplotstableread{graphics/data/bips_dt_sim_ici.dat}\tableSIM
  
  \begin{axis}[%
    name   = states,
    width  = .73\textwidth,
    height = .1\textheight,
    scale only axis,
    xmin = 0,
    xmax = 500,
    ymin = -35,
    ymax = 60,
    xminorticks = false,
    yminorticks = false,
    xlabel = {time $\tau \cdot t$},
    ylabel = {outputs $y(\tau \cdot t)$},
    ylabel style   = {yshift = -.3em},
    scaled x ticks = false,
    x tick label style = {/pgf/number format/1000 sep={\,}},
    y tick label style = {/pgf/number format/1000 sep={\,}},
    cycle list name    = stateslist
  ]
  
    \foreach \y in {5, 7, 9}{
      \addplot+ table[x index = 0, y index = \y] {\tableSIM};
    }
  \end{axis}
\end{tikzpicture}%
  \tikzexternaldisable%

    \caption{ICI controller}
    \label{fig:bips_dt_ici}
  \end{subfigure}

  \vspace{0\baselineskip}
  \tikzexternalenable%
  \tikzsetnextfilename{bips_legend}%
  \begin{tikzpicture}[font = \plotfontsize]
  \begin{axis}[%
    hide axis,
    width  = 1cm,
    height = 1cm,
    scale only axis,
    xmin = 0,
    xmax = 10,
    ymin = 0.5,
    ymax = 1.5,
    legend columns    = -1,
    legend cell align = {left},
    legend style      = {
      at     = {(0,0)},
      anchor = center,
      /tikz/every even column/.append style = {column sep = 0.5cm}}
  ]
    
    \pgfplotsset{cycle list name = stateslist}
    \pgfplotsinvokeforeach{1, 2, 3}{\addplot coordinates {(0,0)};}
    \addlegendentry{output $y_{1}$}
    \addlegendentry{output $y_{2}$}
    \addlegendentry{output $y_{3}$}
  \end{axis}
\end{tikzpicture}%
  \tikzexternaldisable%

  \caption{Discrete-time power network: In the uncontrolled simulation, the
    output trajectories tend to infinity due to the instability of the
    power network.
    ICI provides a stabilizing controller after $20$ queries.
    The comparing low-dimensional controller inference setups did not provide
    a stabilizing controller within our limit of $300$ model queries.}
  \label{fig:bips_dt}
\end{figure}
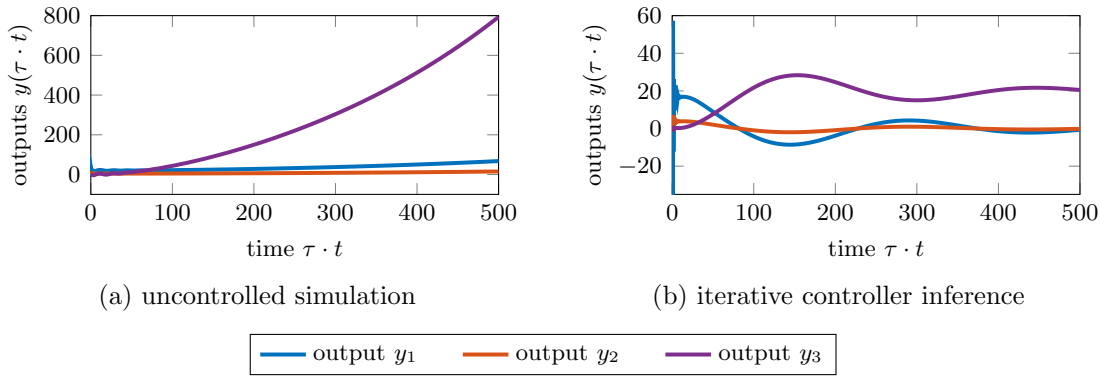

This example is a linearized model of the Brazilian interconnected power
network, which is unstable under heavy load
conditions~\cite{LosMRetal13, FreRM08}.
We consider the model data \textsf{bips98\_606}
from~\cite{morwiki_powersystems}, which yields a system of linear
differential-algebraic equations (DAEs) with $\nh = 7\,135$ and $\np = 4$.
To simulate this DAE system, we discretized the model using the IMEX Euler
scheme with the sampling time $\tau = 0.1$.

The simulation with the controller constructed from ICI with $20$
model queries is shown in \Cref{fig:bips_dt}.
ContInf based on a single or multiple trajectories using also $20$
model queries was not able to stabilize the system.
These simulation results are not shown here but can be found in the
accompanying code package~\cite{supWer25}.
In fact, we were not able to find any setup within our limit of $300$ model
queries in which the non-adaptive ContInf was able to provide a
stabilizing controller with either a single or multiple trajectories.


\subsection{Tubular reactor}

\begin{figure}[t]
  \centering
  \begin{subfigure}[b]{.49\linewidth}
    \centering
  \tikzexternalenable%
  \tikzsetnextfilename{tubularreactor_ct_nofb}%
  \begin{tikzpicture}[font = \plotfontsize]  
  \begin{axis}[%
    name   = states,
    width  = .725\textwidth,
    height = .135\textheight,
    scale only axis,
    xmin = 0,
    xmax = 30,
    ymin = 0,
    ymax = 1,
    xminorticks = false,
    yminorticks = false,
    xlabel = {time $t$},
    ylabel = {spatial coordinates},
    ylabel style   = {yshift = -.3em},
    scaled x ticks = false,
    x tick label style = {/pgf/number format/1000 sep={\,}},
    y tick label style = {/pgf/number format/1000 sep={\,}}
  ]
  
    \addplot graphics[xmin = 0, xmax = 30, ymin = 0, ymax = 1]
        {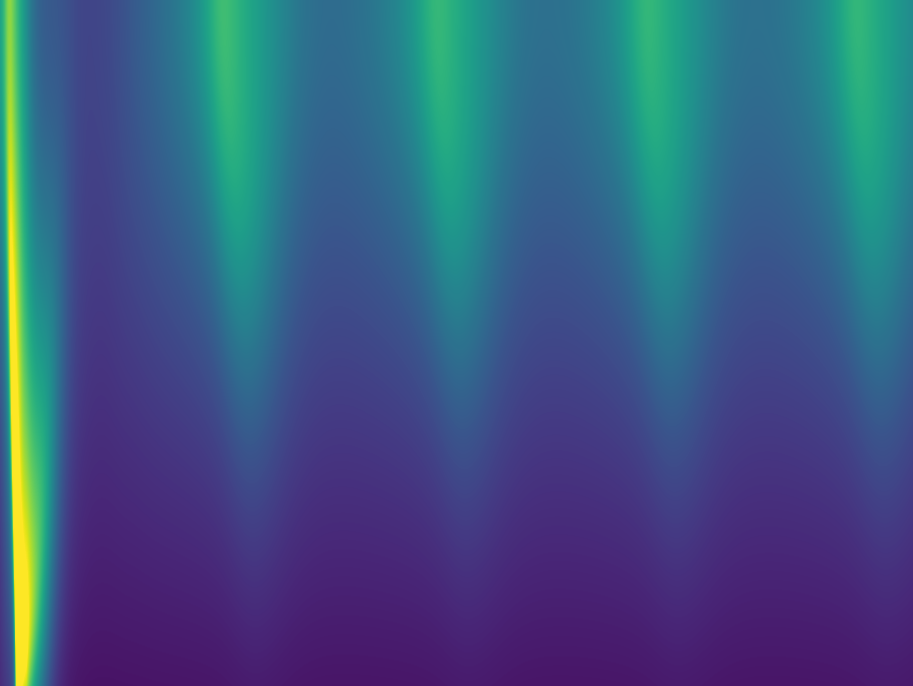};
  \end{axis}
\end{tikzpicture}%
  \tikzexternaldisable%

    \caption{CT: uncontrolled simulation}
    \label{fig:tubularreactor_ct_nofb}
  \end{subfigure}%
  \hfill%
  \begin{subfigure}[b]{.49\linewidth}
    \centering
  \tikzexternalenable%
  \tikzsetnextfilename{tubularreactor_ct_ici}%
  \begin{tikzpicture}[font = \plotfontsize]  
  \begin{axis}[%
    name   = states,
    width  = .725\textwidth,
    height = .135\textheight,
    scale only axis,
    xmin = 0,
    xmax = 30,
    ymin = 0,
    ymax = 1,
    xminorticks = false,
    yminorticks = false,
    xlabel = {time $t$},
    ylabel = {spatial coordinates},
    ylabel style   = {yshift = -.3em},
    scaled x ticks = false,
    x tick label style = {/pgf/number format/1000 sep={\,}},
    y tick label style = {/pgf/number format/1000 sep={\,}}
  ]
  
    \addplot graphics[xmin = 0, xmax = 30, ymin = 0, ymax = 1]
        {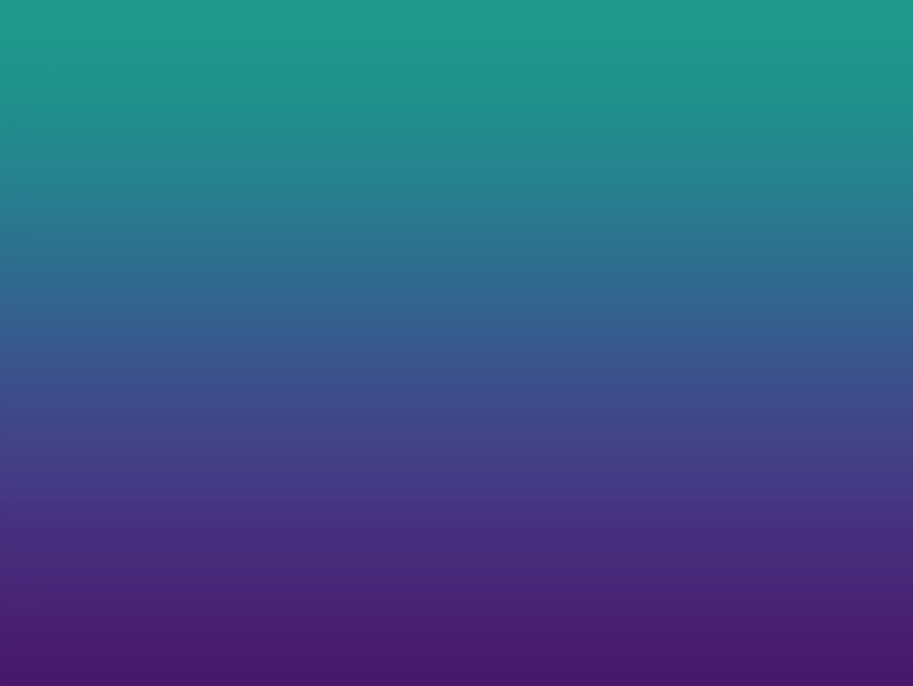};
  \end{axis}
\end{tikzpicture}%
  \tikzexternaldisable%

    \caption{CT: ICI controller}
    \label{fig:tubularreactor_ct_ici}
  \end{subfigure}
  \vspace{0\baselineskip}
  
  \begin{subfigure}[b]{.49\linewidth}
    \centering
  \tikzexternalenable%
  \tikzsetnextfilename{tubularreactor_dt_nofb}%
  \begin{tikzpicture}[font = \plotfontsize]  
  \begin{axis}[%
    name   = states,
    width  = .725\textwidth,
    height = .135\textheight,
    scale only axis,
    xmin = 0,
    xmax = 30,
    ymin = 0,
    ymax = 1,
    xminorticks = false,
    yminorticks = false,
    xlabel = {time $\tau \cdot t$},
    ylabel = {spatial coordinates},
    ylabel style   = {yshift = -.3em},
    scaled x ticks = false,
    x tick label style = {/pgf/number format/1000 sep={\,}},
    y tick label style = {/pgf/number format/1000 sep={\,}}
  ]
  
    \addplot graphics[xmin = 0, xmax = 30, ymin = 0, ymax = 1]
        {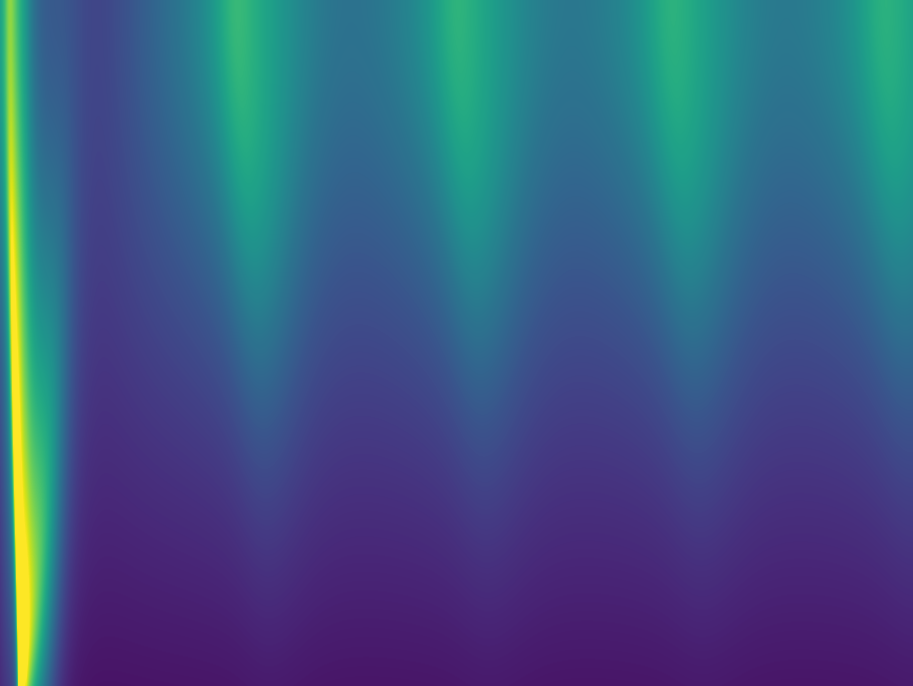};
  \end{axis}
\end{tikzpicture}%
  \tikzexternaldisable%

    \caption{DT: uncontrolled simulation}
    \label{fig:tubularreactor_dt_nofb}
  \end{subfigure}%
  \hfill%
  \begin{subfigure}[b]{.49\linewidth}
    \centering
  \tikzexternalenable%
  \tikzsetnextfilename{tubularreactor_dt_ici}%
  \begin{tikzpicture}[font = \plotfontsize]  
  \begin{axis}[%
    name   = states,
    width  = .725\textwidth,
    height = .135\textheight,
    scale only axis,
    xmin = 0,
    xmax = 30,
    ymin = 0,
    ymax = 1,
    xminorticks = false,
    yminorticks = false,
    xlabel = {time $\tau \cdot t$},
    ylabel = {spatial coordinates},
    ylabel style   = {yshift = -.3em},
    scaled x ticks = false,
    x tick label style = {/pgf/number format/1000 sep={\,}},
    y tick label style = {/pgf/number format/1000 sep={\,}}
  ]
  
    \addplot graphics[xmin = 0, xmax = 30, ymin = 0, ymax = 1]
        {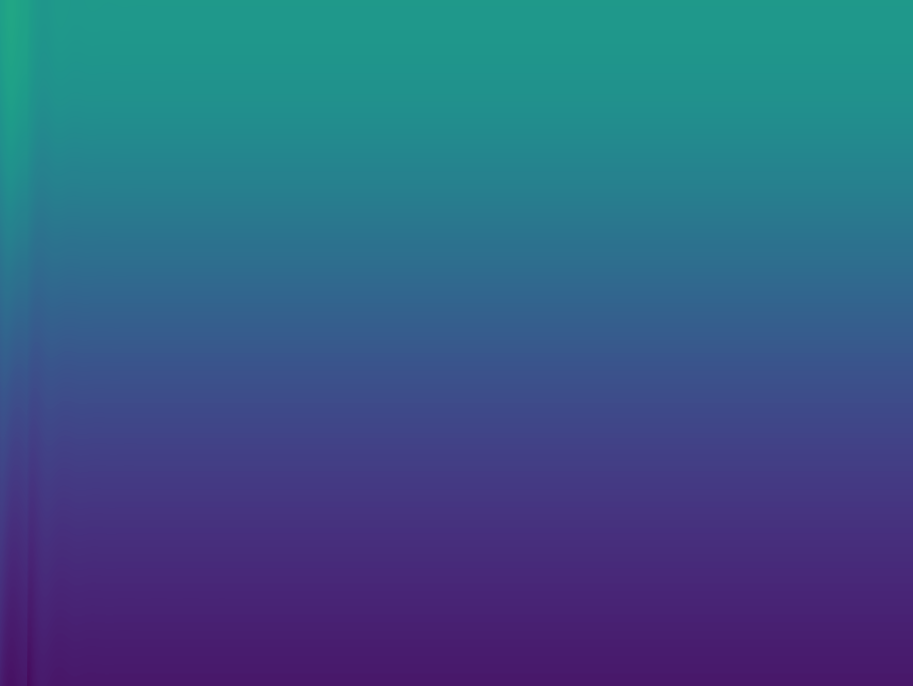};
  \end{axis}
\end{tikzpicture}%
  \tikzexternaldisable%

    \caption{DT: ICI controller}
    \label{fig:tubularreactor_dt_ici}
  \end{subfigure}

  \vspace{0\baselineskip}
  \tikzexternalenable%
  \tikzsetnextfilename{tubularreactor_legend}%
  \begin{tikzpicture}
  \node[draw = none, minimum width = 0cm, inner sep = 0cm](start){};
  \node(leg) at (start.north east) [anchor = north west]{\tikz
  \begin{axis}[%
    hide axis,
    scale only axis,
    width  = 10cm,
    height = .1cm,
    point meta min = 1.0,
    point meta max = 1.3,
    colorbar,
    colorbar horizontal,
    colorbar style = {
      at = {(.5, 0)},
      anchor = north},
    scaled x ticks = false,
    x tick label style = {/pgf/number format/fixed}]
  \end{axis};};
  \node[draw = none, minimum width = 0cm, inner sep = 0cm](end)
    at (leg.north east) [anchor = north west]{};
\end{tikzpicture}%
  \tikzexternaldisable%

  \caption{Continuous (CT) and discrete-time (DT) tubular reactor:
    Both in continuous as well as discrete-time, ICI constructs stabilizing
    controllers.
    In the discrete-time case, ICI performs better than the non-adaptive
    low-dimensional controller inference in terms of model
    queries by a factor of $1.5$.}
  \label{fig:tubularreactor}
\end{figure}

\begin{figure}[t]
  \centering
  \begin{subfigure}[b]{.49\linewidth}
    \centering
  \tikzexternalenable%
  \tikzsetnextfilename{tubularreactor_convergence_1}%
  \begin{tikzpicture}[font = \plotfontsize]
  \pgfplotstableread{graphics/data/tubularreactor_nl_dt_convergence.dat}\tableSIM
  
  \begin{semilogyaxis}[%
    name   = states,
    width  = .75\textwidth,
    height = .115\textheight,
    scale only axis,
    xmin = 0,
    xmax = 50,
    ymin = 1e-12,
    ymax = 1e+1,
    xminorticks = false,
    yminorticks = false,
    xlabel = {iteration steps},
    ylabel = {},
    ylabel style   = {yshift = -.3em},
    scaled x ticks = false,
    x tick label style = {/pgf/number format/1000 sep={\,}},
    y tick label style = {/pgf/number format/1000 sep={\,}},
    cycle list name    = convergencelist
  ]
  
    \foreach \y in {1, 2, 3}{
      \addplot+ table[x index = 0, y index = \y] {\tableSIM};
    }
  \end{semilogyaxis}
\end{tikzpicture}%
  \tikzexternaldisable%

    \caption{fine basis extension}
    \label{fig:tubularreactor_convergence_1}
  \end{subfigure}%
  \hfill%
  \begin{subfigure}[b]{.49\linewidth}
    \centering
  \tikzexternalenable%
  \tikzsetnextfilename{tubularreactor_convergence_2}%
  \begin{tikzpicture}[font = \plotfontsize]
  \pgfplotstableread{graphics/data/tubularreactor_nl_dt_convergence_coarseorth.dat}\tableSIM
  
  \begin{semilogyaxis}[%
    name   = states,
    width  = .75\textwidth,
    height = .115\textheight,
    scale only axis,
    xmin = 0,
    xmax = 50,
    ymin = 1e-10,
    ymax = 1e+1,
    xminorticks = false,
    yminorticks = false,
    xlabel = {iteration steps},
    ylabel = {},
    ylabel style   = {yshift = -.3em},
    scaled x ticks = false,
    x tick label style = {/pgf/number format/1000 sep={\,}},
    y tick label style = {/pgf/number format/1000 sep={\,}},
    cycle list name    = convergencelist
  ]
  
    \foreach \y in {1, 2, 3}{
      \addplot+ table[x index = 0, y index = \y] {\tableSIM};
    }
  \end{semilogyaxis}
\end{tikzpicture}%
  \tikzexternaldisable%

    \caption{coarse basis truncation}
    \label{fig:tubularreactor_convergence_2}
  \end{subfigure}

  \vspace{0\baselineskip}
  \tikzexternalenable%
  \tikzsetnextfilename{convergence_legend}%
  \begin{tikzpicture}[font = \plotfontsize]
  \begin{axis}[%
    hide axis,
    width  = 1cm,
    height = 1cm,
    scale only axis,
    xmin = 0,
    xmax = 10,
    ymin = 0.5,
    ymax = 1.5,
    legend columns    = -1,
    legend cell align = {left},
    legend style      = {
      at     = {(0,0)},
      anchor = center,
      /tikz/every even column/.append style = {column sep = 0.5cm}}
  ]
    
    \pgfplotsset{cycle list name = convergencelist}
    \pgfplotsinvokeforeach{1, 2, 3}{\addplot coordinates {(0,0)};}
    \addlegendentry{$\lVert x(t_{j+1}) - V_{j}V_{j}^{\trans} x(t_{j+1}) \rVert_{2}$}
    \addlegendentry{$\lVert K_{j+1} - K_{j} \rVert_{2} /
      \lVert K_{j}\rVert_{2}$}
    \addlegendentry{$\lVert x(t_{j+1}) - \xs \rVert_{2}$}
  \end{axis}
\end{tikzpicture}%
  \tikzexternaldisable%

  \caption{ICI convergence behavior for discrete-time tubular reactor:
    The convergence of ICI is illustrated in terms of the approximation error
    of new states in the current projection space $\Vcal_{j}$ and the
    relative change of the controller between steps.
    In both cases, the distance of the trajectory to the steady state
    decreases, indicating stabilization.}
  \label{fig:tubularreactor_convergence}
\end{figure}
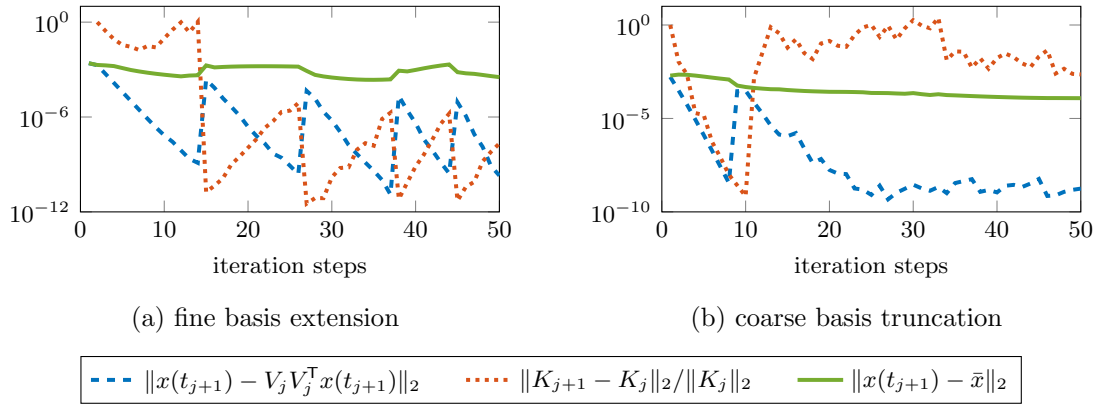

This example describes an exothermic reaction inside a tubular reactor with one
reactant~\cite{HeiP81, Zho12}.
Due to the influence of the temperature on the reaction speed, the system
enters an oscillating limit cycle behavior in the case of any disturbances.
Following the one-dimensional spatial discretization in~\cite{KraW19},
we have a system of $\nh = 3\,998$ nonlinear ODEs with $\np = 2$ inputs
that allow to control the reactant concentration at the beginning of the tube
and the temperature along the tube.
We consider the continuous-time as well as a discrete-time version of this
example.
The discrete-time model has been obtained using the IMEX Euler scheme with
sampling time $\tau = 0.01$, where the linear components are evaluated
implicitly in time and the nonlinear components and controls explicitly in time.

The continuous-time and discrete-time uncontrolled simulations and the results
for the new ICI approach are shown in \Cref{fig:tubularreactor}.
In the continuous-time case, for ICI we used the adaptive \texttt{ode15s}
solver to generate data at $6$ discrete time points.
The simulation of the resulting controller is shown in
\Cref{fig:tubularreactor_ct_ici}.
In the discrete-time case, ICI needed $8$ model queries to construct a
stabilizing controller.
The non-adaptive ContInf based on random inputs using the same
sample number of model evaluations cannot stabilize the system.
These results can be found in the accompanying code package~\cite{supWer25}.
However, increasing the number of allowed model queries to $12$ yields
stabilizing controllers via a single or two trajectories used in the
ContInf approach.
This is a $1.5\times$ increase in the number of model queries compared to ICI.

In \Cref{fig:tubularreactor_convergence}, we investigate the convergence
behavior of the ICI method in the discrete-time case beyond the minimum
amount of data needed to construct a stabilizing controller.
The plots show the approximation error of the states in the constructed
projection space, the relative change of the constructed controllers and the
distance of the state trajectory from the desired steady state.
Similar to the results shown in \Cref{sec:heatflow}, we observe a fast
convergence in terms of the projection error in the case of a small
orthogonalization tolerance and the regular applications of re-projections in
this case indicated by the spikes in the projection error in
\Cref{fig:tubularreactor_convergence_1}.
In the case of coarse basis extensions using a larger orthogonalization
tolerance shown in \Cref{fig:tubularreactor_convergence_2}, there is only one
re-projection needed in the beginning.
Also, the distance of the state trajectory to the steady state is decreasing
more consistently in \Cref{fig:tubularreactor_convergence_2} than in
\Cref{fig:tubularreactor_convergence_1}.
However, the decrease is not monotonic as it was for the heat flow example
(\Cref{fig:heatflow_convergence_2}) likely due to the oscillating nature of the
instabilities.


\subsection{Crystal cluster growing}

\begin{figure}[t]
  \centering
  \begin{subfigure}[b]{.49\linewidth}
    \raggedleft
  \tikzexternalenable%
  \tikzsetnextfilename{todalattice_dt_nofb}%
  \begin{tikzpicture}[font = \plotfontsize]
  \pgfplotstableread{graphics/data/todalattice_nl_dt_sim_nofb.dat}\tableSIM
  
  \begin{axis}[%
    name   = states,
    width  = .73\textwidth,
    height = .1\textheight,
    scale only axis,
    xmin = 0,
    xmax = 100,
    ymin = -2,
    ymax = 2,
    restrict y to domain* = -2.0:2.0,
    xminorticks = false,
    yminorticks = false,
    xlabel = {time $\tau \cdot t$},
    ylabel = {outputs $y(\tau \cdot t)$},
    ylabel style   = {yshift = -.3em},
    scaled x ticks = false,
    x tick label style = {/pgf/number format/1000 sep={\,}},
    y tick label style = {/pgf/number format/1000 sep={\,}},
    cycle list name    = stateslist
  ]
  
    \foreach \y in {4, 5, 6}{
      \addplot+ table[x index = 0, y index = \y] {\tableSIM};
    }
  \end{axis}
\end{tikzpicture}%
  \tikzexternaldisable%

    \caption{uncontrolled simulation}
    \label{fig:todalattice_dt_nofb}
  \end{subfigure}%
  \hfill%
  \begin{subfigure}[b]{.49\linewidth}
    \raggedleft
  \tikzexternalenable%
  \tikzsetnextfilename{todalattice_dt_ici}%
  \begin{tikzpicture}[font = \plotfontsize]
  \pgfplotstableread{graphics/data/todalattice_nl_dt_sim_ici.dat}\tableSIM
  
  \begin{axis}[%
    name   = states,
    width  = .73\textwidth,
    height = .1\textheight,
    scale only axis,
    xmin = 0,
    xmax = 100,
    ymin = -2,
    ymax = 2,
    xminorticks = false,
    yminorticks = false,
    xlabel = {time $\tau \cdot t$},
    ylabel = {outputs $y(\tau \cdot t)$},
    ylabel style   = {yshift = -.3em},
    scaled x ticks = false,
    x tick label style = {/pgf/number format/1000 sep={\,}},
    y tick label style = {/pgf/number format/1000 sep={\,}},
    cycle list name    = stateslist
  ]
  
    \foreach \y in {4, 5, 6}{
      \addplot+ table[x index = 0, y index = \y] {\tableSIM};
    }
  \end{axis}
\end{tikzpicture}%
  \tikzexternaldisable%

    \caption{ICI controller}
    \label{fig:todalattice_dt_ici}
  \end{subfigure}
  \vspace{0\baselineskip}

  \begin{subfigure}[b]{.49\linewidth}
    \raggedleft
  \tikzexternalenable%
  \tikzsetnextfilename{todalattice_dt_sti}%
  \begin{tikzpicture}[font = \plotfontsize]
  \pgfplotstableread{graphics/data/todalattice_nl_dt_sim_sti.dat}\tableSIM
  
  \begin{axis}[%
    name   = states,
    width  = .73\textwidth,
    height = .1\textheight,
    scale only axis,
    xmin = 0,
    xmax = 100,
    ymin = -2,
    ymax = 2,
    restrict y to domain* = -2.0:2.0,
    xminorticks = false,
    yminorticks = false,
    xlabel = {time $\tau \cdot t$},
    ylabel = {outputs $y(\tau \cdot t)$},
    ylabel style   = {yshift = -.3em},
    scaled x ticks = false,
    x tick label style = {/pgf/number format/1000 sep={\,}},
    y tick label style = {/pgf/number format/1000 sep={\,}},
    cycle list name    = stateslist
  ]
  
    \foreach \y in {4, 5, 6}{
      \addplot+ table[x index = 0, y index = \y] {\tableSIM};
    }
  \end{axis}
\end{tikzpicture}%
  \tikzexternaldisable%

    \caption{ContInf (single trajectory)}
    \label{fig:todalattice_dt_sti}
  \end{subfigure}%
  \hfill%
  \begin{subfigure}[b]{.49\linewidth}
    \raggedleft
  \tikzexternalenable%
  \tikzsetnextfilename{todalattice_dt_mti}%
  \begin{tikzpicture}[font = \plotfontsize]
  \pgfplotstableread{graphics/data/todalattice_nl_dt_sim_mti.dat}\tableSIM
  
  \begin{axis}[%
    name   = states,
    width  = .73\textwidth,
    height = .1\textheight,
    scale only axis,
    xmin = 0,
    xmax = 100,
    ymin = -2,
    ymax = 2,
    restrict y to domain* = -2.0:2.0,
    xminorticks = false,
    yminorticks = false,
    xlabel = {time $\tau \cdot t$},
    ylabel = {outputs $y(\tau \cdot t)$},
    ylabel style   = {yshift = -.3em},
    scaled x ticks = false,
    x tick label style = {/pgf/number format/1000 sep={\,}},
    y tick label style = {/pgf/number format/1000 sep={\,}},
    cycle list name    = stateslist
  ]
  
    \foreach \y in {4, 5, 6}{
      \addplot+ table[x index = 0, y index = \y] {\tableSIM};
    }
  \end{axis}
\end{tikzpicture}%
  \tikzexternaldisable%

    \caption{ContInf (multiple trajectories)}
    \label{fig:todalattice_dt_mti}
  \end{subfigure}

  \vspace{0\baselineskip}
  \tikzexternalenable%
  \tikzsetnextfilename{todalattice_legend}%
  \begin{tikzpicture}[font = \plotfontsize]
  \begin{axis}[%
    hide axis,
    width  = 1cm,
    height = 1cm,
    scale only axis,
    xmin = 0,
    xmax = 10,
    ymin = 0.5,
    ymax = 1.5,
    legend columns    = -1,
    legend cell align = {left},
    legend style      = {
      at     = {(0,0)},
      anchor = center,
      /tikz/every even column/.append style = {column sep = 0.5cm}}
  ]
    
    \pgfplotsset{cycle list name = stateslist}
    \pgfplotsinvokeforeach{1, 2, 3}{\addplot coordinates {(0,0)};}
    \addlegendentry{output $y_{1}$}
    \addlegendentry{output $y_{2}$}
    \addlegendentry{output $y_{3}$}
  \end{axis}
\end{tikzpicture}%
  \tikzexternaldisable%

  \caption{Discrete-time crystal cluster:
    The three clusters quickly break apart and diverge from each other in the
    uncontrolled simulation and only the ICI approach yields a suitable
    controller using $16$ model evaluations.
    About $4.4\times$ more queries than in ICI are needed by ContInf
    using unguided single trajectory data.}
  \label{fig:todalattice_dt}
\end{figure}
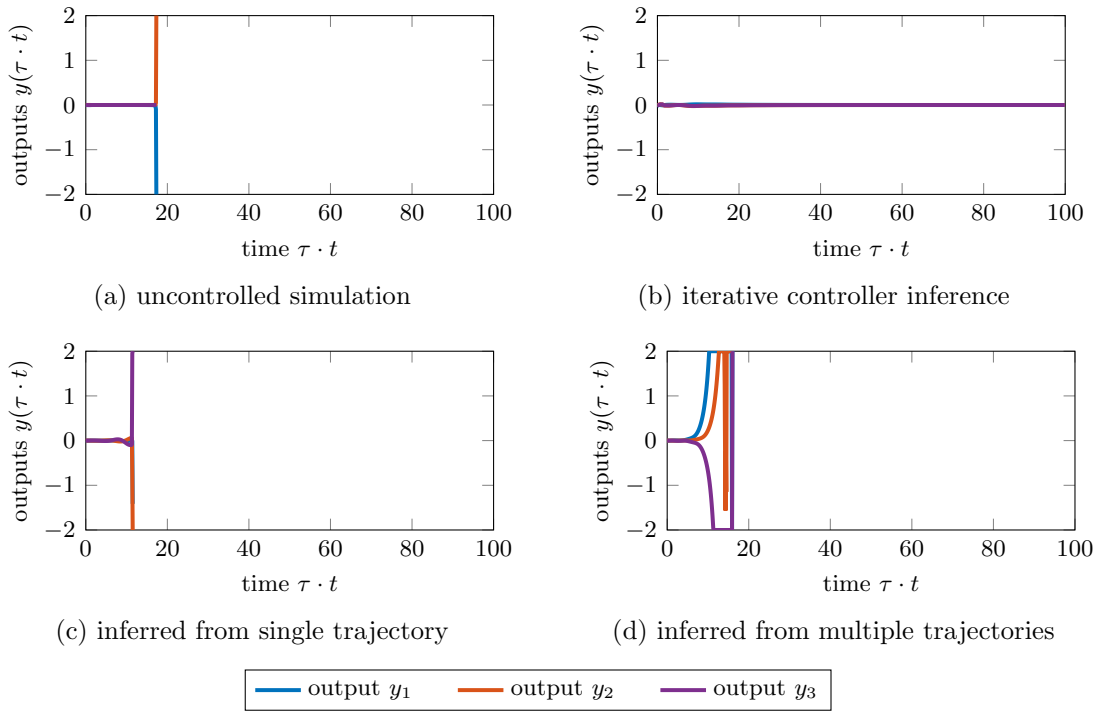

The Toda lattice model is used to describe one-dimensional
crystals~\cite{Tod67, Wer21} using classical physics laws from solid-state
mechanics about mass-spring-damper systems.
Here, we consider the case of a crystalization process in which three major
crystal clusters have been formed already.
Stabilization is needed to hold these clusters in place to allow for the
full crystal to form; see~\cite{WerP25} for further details of the setup
including the describing equations.
In our experiments, we are using $2\,000$ particles leading to a system of
$\nh = 4\,000$ nonlinear ODEs with $\np = 3$ inputs to change the displacement
of the three clusters.
For the discretization of the model, we use the IMEX Euler scheme with
sampling time $\tau = 0.1$, where the linear parts are handled implicitly and
the nonlinear parts and controls explicitly.

The simulations of the model can be seen in \Cref{fig:todalattice_dt}.
The outputs shown are the mean velocities of the particles of the clusters
and any blow-up behavior implies that the clusters are drifting apart
indefinitely.
ICI provides a stabilizing controller after $16$ model queries.
The non-adaptive ContInf using the same number of
queries with a single or multiple trajectories from random inputs are not
stabilizing as shown in
\Cref{fig:todalattice_dt_sti,fig:todalattice_dt_mti}.
Using a single trajectory only, $70$ model evaluations are needed to
construct a stabilizing controller, which is around $4.4\times$ more queries
than ICI needs.
Allowing data from three trajectories reduces the number of model
queries for the design of a stabilizing controller to $30$, which is still
around $1.8\times$ more than needed for ICI.


\subsection{Laminar flow behind an obstacle}

\begin{figure}[t]
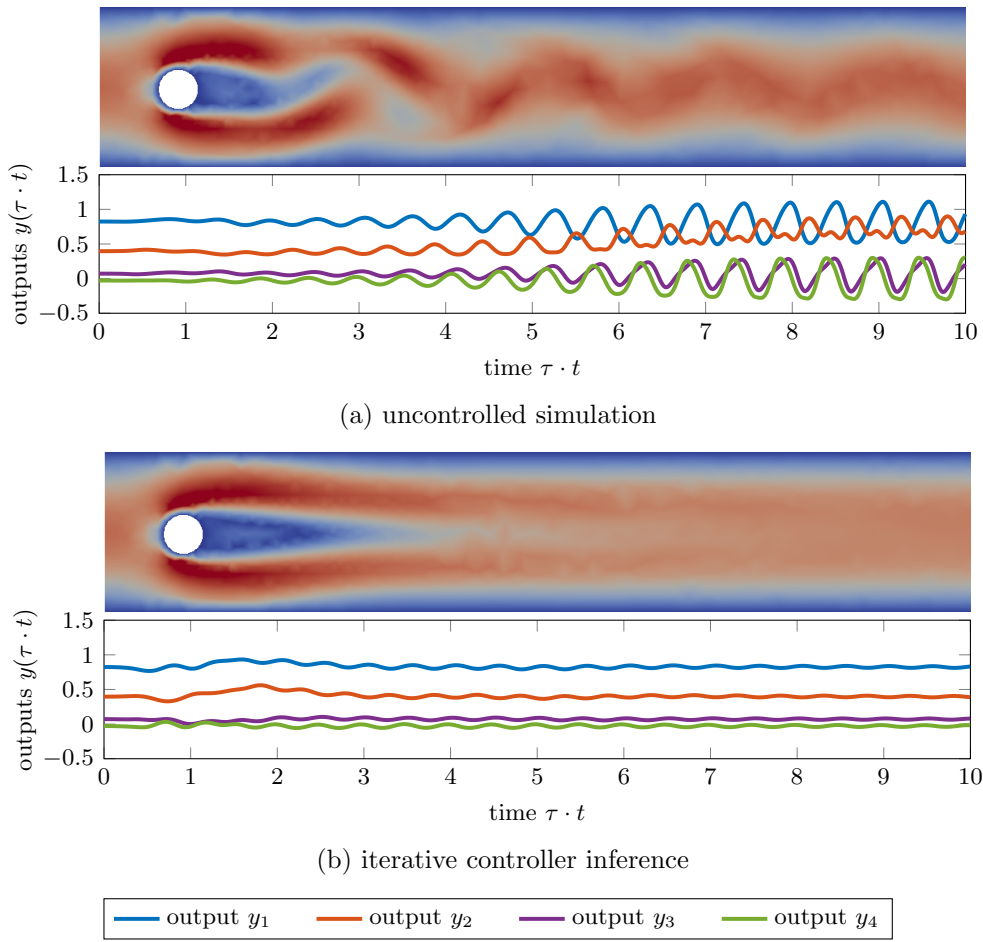

  \centering
  \begin{subfigure}[b]{.95\linewidth}
    \centering
  \tikzexternalenable%
  \tikzsetnextfilename{cylinderwake_dt_nofb}%
  \input{graphics/cylinderwake_dt_nofb.tikz}%
  \tikzexternaldisable%

    \caption{uncontrolled simulation}
    \label{fig:cylinderwake_dt_nofb}
  \end{subfigure}
  \vspace{0\baselineskip}

  \begin{subfigure}[b]{.95\linewidth}
    \centering
  \tikzexternalenable%
  \tikzsetnextfilename{cylinderwake_dt_ici}%
  \input{graphics/cylinderwake_dt_ici.tikz}%
  \tikzexternaldisable%

    \caption{ICI controller}
    \label{fig:cylinderwake_dt_ici}
  \end{subfigure}

  \vspace{0\baselineskip}
  \tikzexternalenable%
  \tikzsetnextfilename{cylinderwake_legend}%
  \begin{tikzpicture}[font = \plotfontsize]
  \begin{axis}[%
    hide axis,
    width  = .75\textwidth,
    height = .1\textheight,
    scale only axis,
    xmin = 0,
    xmax = 10,
    ymin = 0.5,
    ymax = 1.5,
    legend columns = 5, 
    legend style = {
      at     = {(0,0)},
      anchor = center,
      /tikz/every even column/.append style = {column sep = 0.5cm}}
  ]
    
    \pgfplotsset{cycle list name = stateslist}
    \pgfplotsinvokeforeach{1, 2, 3, 4}{\addplot coordinates {(0,0)};}
    \addlegendentry{output $y_{1}$}
    \addlegendentry{output $y_{2}$}
    \addlegendentry{output $y_{3}$}
    \addlegendentry{output $y_{4}$}
  \end{axis}
\end{tikzpicture}%
  \tikzexternaldisable%

  \caption{Laminar flow behind an obstacle:
    The upper plots show the velocity field in the last time step and the
    lower plots the mean flow velocities at the end of the channel.
    ICI can stabilizes the system using only $10$ system queries, while for
    non-adaptive ContInf $25 \times$ or $12 \times$ more model
    queries are needed to construct stabilizing controllers, depending on the
    use of only a single or six state trajectories.}
  \label{fig:cylinderwake_dt}
\end{figure}

As final example, we consider the laminar flow behind an obstacle, which
is described by the incompressible Navier-Stokes equations~\cite{BehBH17}.
At medium Reynolds numbers---here $\Reyn = 90$---the non-zero steady state
tends to unstable oscillatory vortex shedding.
We consider the example matrices given in~\cite{BehBH17}, providing a spatially
discretized model with $\nh = 6\,618$ nonlinear DAEs and $\np = 6$ inputs that
allow the change of the flow velocities in a small rectangle behind the
obstacle.
For the time simulation, the system is discretized in time via the IMEX Euler
scheme with sampling time $\tau = 0.0025$ and where the linear parts are handled
implicitly and the nonlinear components and controls explicitly.

The uncontrolled simulation and the results for the ICI-based feedback
controller are shown in \Cref{fig:cylinderwake_dt} with snapshots of the
velocity fields from the last time step and trajectories of the
mean velocities at the end of the channel.
ICI needed only $10$ model queries to provide a stabilizing controller for the
system.
In contrast, to stabilize the system with the non-adaptive
ContInf approach, either $250$ model queries in a single state trajectory or
$120$ queries for multiple trajectories are needed to construct stabilizing
controllers.
These numbers of model queries are $25$ and $12$ times larger than for ICI,
respectively.


\section{Conclusions}%
\label{sec:conclusions}

We introduced an adaptive sampling scheme for the construction of informative
data sets for the task of constructing stabilizing feedback controllers.
The approach is inspired by sufficient conditions for data informativity using
stabilizing control signals as system inputs.
Thereby, a sequence of stabilizing controllers is constructed over nested
low-dimensional subspaces based on state data samples that are obtained by
exciting the system using previous stabilizing controllers.
The numerical experiments demonstrate that up to one order
of magnitude fewer samples are needed by the proposed approach
compared to unguided data collection.
The method as presented in this work solely relies on forward evaluations of
the underlying model.
Future work includes combining the proposed adaptive sampling approach with
adjoint information, which can further reduce the required number of data
samples~\cite{WerP23a}.
This is of particular interest in applications where the instabilities affect
high-dimensional subspaces such as in chaotic systems.


\section*{Acknowledgments}%
\addcontentsline{toc}{section}{Acknowledgments}

Parts of this work were carried out while Werner was at the Courant Institute
of Mathematical Sciences, New York University, USA.

Peherstorfer was partially supported by the Air Force Office of Scientific
Research (AFOSR), USA, award FA9550-21-1-0222 and FA9550-24-1-0327
(Dr.\ Fariba Fahroo).


\addcontentsline{toc}{section}{References}
\bibliographystyle{plainurl}
\bibliography{bibtex/myref}

\end{document}